\documentclass[11pt]{article}

\usepackage{amsfonts}
\usepackage{amsthm}
\usepackage{amsmath}
\usepackage{graphicx}
\usepackage{enumerate}

\setcounter{MaxMatrixCols}{10}

\newtheorem{theorem}{Theorem}

\newtheorem{definition}{Definition}
\newtheorem{example}{Example}
\newtheorem{lemma}{Lemma}
\newtheorem{proposition}{Proposition}

\numberwithin{equation}{section}
\numberwithin{definition}{section}
\numberwithin{lemma}{section}
\numberwithin{theorem}{section}
\setlength{\parindent}{3ex}

\begin{document}

\author{Sezin Aykurt Sepet\\
%%EndAName
\small K\i r\c{s}ehir Ahi Evran University, Department of Mathematics, K\i r\c{s}ehir, Turkey\\
\small saykurt@ahievran.edu.tr}
%\date{}
\title{Conformal Bi-slant Submersions}
\maketitle

\begin{abstract}
 We study conformal bi-slant submersions from almost Hermitian manifolds onto Riemannian manifolds as a generalized of conformal anti-invariant, conformal semi-invariant, conformal semi-slant, conformal slant and conformal hemi-slant submersions. We investigated the integrability of distributions and obtain necessary and sufficient conditions for the maps to have totally geodesic fibers. Also we studied the total geodesicity of such maps.\\

\textbf{Keywords} :Bi-slant submersion, conformal bi-slant submersion, almost Hermitian manifold.\\

\textbf {2010 Subject Classification}: {53C15, 53C43}

\end{abstract}

\section{Introduction}

In complex geometry,  as a generalization of holomorphic and totally real immersions, slant immersions were defined by Chen \cite{chen}. Cabrerizo et al \cite{cabrerizo} defined bi-slant submanifolds in almost contact metric manifolds. In \cite{uddin} Uddin et al.  studied warped product bi-slant immersions in Kaehler manifolds. They proved that there do not exist any warped product bi-slant submanifolds of Kaehler manifolds other than hemi-slant warped products and CR-warped products.

The theory of Riemannian submersions as an analogue of isemetric immersions was initiated by O'Neill \cite{oneill} and Gray\cite{gray}. The Riemannian submersions are important in physics owing to applications in the Yang-Mills theory, Kaluza-Klein theory, robotic theory, supergravity and superstring theories. In Kaluza-Klein theory, the general solution of a recent model is given in point of harmonic maps satisfying Einstein equations (see \cite{yang1,kaluza 1,kaluza2,kaluza3,yang2,supergravity1,supergravity2}). Altafini \cite{altafini} expressed some applications of submersions in the theory of robotics and \c{S}ahin \cite{sahin} also investigated some applications of Riemannian submersions on redundant robotic chains. On the other hand Riemannian submersions are very useful in studying the geometry of Riemannian manifolds equipped with differentiable structures.
In \cite{watson} Watson introduced the notion of almost Hermitian submersions between almost complex manifolds. He investigated some geometric properties between base manifold and total manifold as well as fibers. \c{S}ahin \cite{sahinanti} introduced anti-invariant Riemannian submersions from almost Hermitian manifolds. He showed that such maps have some geometric properties. Also he studied slant submersions from almost Hermitian manifolds onto a Riemannian manifolds \cite{sahinslant}. Recently, considering different conditions on Riemannian submersions many studies have been done (see \cite{sezin,sayar2,sayar,sayar3,sahinsemi,hakan,Tastan2}).

As a special horizontally conformal maps which were introduced independently by Fuglede and Ishihara, horizontally conformal submersions are defined as follows $(M_{1},g_{1})$ and $(M_{2},g_{2})$ are Riemannian manifolds of dimension $m_{1}$ and $m_{2}$, respectively. A smooth submersion $f:(M_{1},g_{1})\rightarrow (M_{2},g_{2})$ is called a horizontally conformal submersion if there is a positive function $\lambda$ such that
\begin{align*}
\lambda^{2}g_{1}\left(X_{1},Y_{1}\right)=g_{2}\left(f_{*}X_{1},f_{*}Y_{1}\right)
\end{align*}
for all $X_{1}, Y_{1}\in \Gamma\left(\left(\ker f_{*}\right)^{\perp}\right)$.
Here a horizontally conformal submersion $f$ is called horizontally homothetic if the $grad \lambda$ is vertical i.e.
\begin{align*}
\mathcal{H}\left(grad\lambda\right)=0.
\end{align*}
 We denote by $\mathcal{V}$ and $\mathcal{H}$ the projections on the vertical distributions $\left(ker f_{*}\right)$ and horizontal distributions $\left(ker f_{*}\right)^{\perp}$. It can be said that Riemannian submersion is a special horizontally conformal submersion with $\lambda=1$.
Recently, Akyol and \c{S}ahin introduced conformal anti-invariant submersions \cite{akyolantiinv2}, conformal semi-invariant submersion\cite{akyolsemiinv}, conformal slant submersion \cite{akyol1} and conformal semi-slant submersions\cite{akyolsemislant}. Also the geometry of conformal submersions have been studied by several authors \cite{gunduzalpconf,kumar}.\\
In section 2 we review basic formulas and definitions needed for this paper. In section 3, we define the new conformal bi-slant submersion from almost Hermitian manifolds onto Riemannian manifolds and present a example. We investigate the geometry of the horizontal distribution and the vertical distribution. Finally we obtain necessary and sufficient conditions for a conformal bi-slant submersion to be totally geodesic.

\section{Preliminaries}

Let $\left(M_{1},g_{1},J\right)$ be an almost Hermitian manifold. Then this means that $M_{1}$ admits a tensor field $J$ of type $(1,1)$ on $M_{1}$ which satisfy
\begin{align}
J^{2}=-I, \ \ g_{1}\left(JE_{1},JE_{2}\right)=g_{1}\left(E_{1},E_{2}\right)
\end{align}
for $E_{1},E_{2}\in \Gamma(TM_{1})$. An almost Hermitian manifold $M_{1}$ is called Kaehlerian manifold if
\begin{align*}
\left(\nabla_{E_{1}}J\right)E_{2}=0, \ \ E_{1},E_{2}\in\Gamma\left(TM_{1}\right)
\end{align*}
where $\nabla$ is the operator of Levi-Civita covariant differentiation.

Now, we will give some definitions and theorems about the concept of (horizontally) conformal submersions.
\begin{definition}
	Let $(M_{1},g_{1})$ and $(M_{2},g_{2})$ are two Riemannian manifolds with the dimension $m_{1}$ and $m_{2}$, respectively. A smooth map $f:(M_{1},g_{1})\rightarrow (M_{2},g_{2})$ is called horizontally weakly conformal or semi conformal at $q\in M$ if, either
	\begin{enumerate}[i.]
		\item $df_{q}=0$, or
		\item $df_{q}$ is surjective and there exists a number $\Omega(q)\neq 0$ satisfying
		\begin{align*}
		g_{2}\left(df_{q}X,df_{q}Y\right)=\Omega(q)g_{1}\left(X,Y\right)
		\end{align*}
		for $X,Y\in \Gamma\left(\ker(df)\right)^{\perp}$.
	\end{enumerate}
Here the number $\Omega(q)$ is called the square dilation. Its square root $\lambda(q)=\sqrt{\Omega(q)}$ is called the dilation. The map $f$ is called horizontally weakly conformal or semi-conformal on $M_{1}$ if it is horizontally weakly conformal at every point of $M_{1}$. it is said to be a conformal submersion if $f$ has no critical point.
\end{definition}
Let $f:M_{1}\rightarrow M_{2}$ be a submersion. A vector field $X_{1}$ on $M_{1}$ is called a basic vector field if $X_{1}\in \Gamma\left(\left(\ker f_{*}\right)^{\perp}\right)$ and $f$-related with a vector field $X_{2}$ on $M_{2}$ i.e $f_{*}(X_{1q})=X_{2f(q)}$ for $q\in M_{1}$.

The two $(1,2)$ tensor fields $\mathcal{T}$ and $\mathcal{A}$ on $M$ are given by the formulas
\setlength\arraycolsep{2pt}
\begin{eqnarray}
\mathcal{T}(E_{1},E_{2})&=&\mathcal{T}_{E_{1}}E_{2}=\mathcal{H}\nabla_{\mathcal{V}E_{1}}\mathcal{V}E_{2}+\mathcal{V}\nabla_{\mathcal{V}E_{1}}\mathcal{H}E_{2} \label{2.2} \\
\mathcal{A}(E_{1},E_{2})&=&\mathcal{A}_{E_{1}}E_{2}=\mathcal{V}\nabla_{\mathcal{H}E_{1}}\mathcal{H}E_{2}+\mathcal{H}\nabla_{\mathcal{H}E_{1}}\mathcal{V}E_{2} \label{2.3}
\end{eqnarray}
for $E_{1},E_{2}\in \Gamma\left(TM\right)$ \cite{falcitelli}.\\

Note that a Riemannian submersion $f:M_{1}\longrightarrow M_{2}$ has totally geodesic fibers if and only if $\mathcal{T}$ vanishes identically.

Considering the equations (2.3) and (2.4), one can write
\setlength\arraycolsep{2pt}
\begin{eqnarray}
\nabla_{U_{1}}U_{2}&=&\mathcal{T}_{U_{1}}U_{2}+\bar{\nabla}_{U_{1}}U_{2} \label{2.4}\\
\nabla_{U_{1}}X_{1}&=&\mathcal{H}\nabla_{U_{1}}X_{1}+\mathcal{T}_{U_{1}}X_{1} \label{2.5}\\
\nabla_{X_{1}}U_{1}&=&\mathcal{A}_{X_{1}}U_{1}+\mathcal{V}\nabla_{X_{1}}U_{1} \label{2.6}\\
\nabla_{X_{1}}X_{2}&=&\mathcal{H}\nabla_{X_{1}}X_{2}+\mathcal{A}_{X_{1}}X_{2} \label{2.7}
\end{eqnarray}
for $X_{1},X_{2}\in\Gamma\left(\left(\ker f_{*}\right)^{\perp}\right)$ and $U_{1},U_{2}\in\Gamma\left(\ker f_{*}\right)$, where $\bar{\nabla}_{U_{1}}U_{2}=\mathcal{V}\nabla_{U_{1}}U_{2}$. Then we easily seen that $\mathcal{T}_{U_{1}}$ and $\mathcal{A}_{X_{1}}$ are skew-symmetric i.e $
g_{1}\left(\mathcal{A}_{X_{1}}E_{1},E_{2}\right)=-g_{1}\left(E_{1},\mathcal{A}_{X_{1}}E_{2}\right)$ and
$g_{1}\left(\mathcal{T}_{U_{1}}E_{1},E_{2}\right)=-g_{1}\left(E_{1},\mathcal{T}_{U_{1}}E_{2}\right)$ for any $E_{1},E_{2}\in\Gamma\left(TM_{1}\right)$. For the special case  where $f$ as the horizontal, the following Proposition be given:
\begin{proposition}
	Let $f:\left(M_{1},g_{1}\right)\rightarrow\left(M_{2},g_{2}\right)$ be a horizontally conformal submersion with dilation $\lambda$ and $X_{1},X_{2}\in\Gamma\left(\left(\ker f_{*}\right)^{\perp}\right)$, then
	\begin{align}
	\mathcal{A}_{X_{1}}X_{2}=\frac{1}{2}\left(\mathcal{V}\left[X_{1},X_{2}\right]-\lambda^{2}g_{1}\left(X_{1},X_{2}\right)grad_{\mathcal{V}}\left(\frac{1}{\lambda^{2}}\right)\right)
	\end{align}
 \end{proposition}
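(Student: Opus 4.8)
The plan is to compute the vertical part of $\nabla_{X_1}X_2$ directly and identify it with $\mathcal{A}_{X_1}X_2$ via \eqref{2.7}, using the horizontal conformality to control the symmetric part while the Lie bracket handles the antisymmetric part. First I would symmetrize and antisymmetrize: from \eqref{2.7} we have $\mathcal{A}_{X_1}X_2 = \mathcal{V}\nabla_{X_1}X_2$, so
\begin{align*}
2\mathcal{A}_{X_1}X_2 = \mathcal{V}\nabla_{X_1}X_2 + \mathcal{V}\nabla_{X_2}X_1 + \mathcal{V}[X_1,X_2] - 2\mathcal{V}\nabla_{X_2}X_1,
\end{align*}
so it suffices to show that the symmetric combination $\mathcal{V}\nabla_{X_1}X_2 + \mathcal{V}\nabla_{X_2}X_1$ equals $-\lambda^2 g_1(X_1,X_2)\,grad_{\mathcal V}(1/\lambda^2)$, since the term $\mathcal{V}[X_1,X_2]$ then contributes the remaining $\mathcal{V}[X_1,X_2]$ after one recombines (more precisely one uses $2\mathcal{A}_{X_1}X_2 = \mathcal{V}[X_1,X_2] + (\mathcal{V}\nabla_{X_1}X_2+\mathcal{V}\nabla_{X_2}X_1)$, using $\mathcal{V}\nabla_{X_1}X_2 - \mathcal{V}\nabla_{X_2}X_1 = \mathcal{V}[X_1,X_2]$ since the torsion of $\nabla$ vanishes).

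The heart of the argument is therefore to evaluate $g_1(\mathcal{V}\nabla_{X_1}X_2 + \mathcal{V}\nabla_{X_2}X_1, V)$ for an arbitrary vertical field $V \in \Gamma(\ker f_*)$. I would write $g_1(\nabla_{X_1}X_2 + \nabla_{X_2}X_1, V) = X_1 g_1(X_2,V) + X_2 g_1(X_1,V) - g_1(X_2,\nabla_{X_1}V) - g_1(X_1,\nabla_{X_2}V)$. Here one can take $X_1,X_2$ to be basic and $V$ vertical; then $g_1(X_2,V) = g_1(X_1,V) = 0$, so the first two terms vanish, and $\nabla_{X_i}V$ can be replaced by $[X_i,V]$ plus $\nabla_V X_i$, with $[X_i,V]$ vertical (the flow of a basic field preserves the vertical distribution for a submersion). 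This reduces the computation to $-g_1(X_2,\nabla_V X_1) - g_1(X_1,\nabla_V X_2) = -V g_1(X_1,X_2)$. Now the key input is the horizontal conformality relation $\lambda^2 g_1(X_1,X_2) = g_2(f_*X_1,f_*X_2)$: since $f_*X_1, f_*X_2$ are $f$-related to fixed fields on $M_2$, the function $g_2(f_*X_1,f_*X_2)\circ f$ is constant along fibers, hence $V\big(g_2(f_*X_1,f_*X_2)\circ f\big) = 0$, which gives $V\big(\lambda^2 g_1(X_1,X_2)\big) = 0$, i.e. $V g_1(X_1,X_2) = -g_1(X_1,X_2)\,\lambda^{-2} V(\lambda^2) = \lambda^2 g_1(X_1,X_2)\, V(1/\lambda^2)$.

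Assembling, $g_1(\mathcal{V}\nabla_{X_1}X_2 + \mathcal{V}\nabla_{X_2}X_1, V) = -\lambda^2 g_1(X_1,X_2)\,V(1/\lambda^2) = -\lambda^2 g_1(X_1,X_2)\, g_1(grad_{\mathcal V}(1/\lambda^2), V)$ for all vertical $V$, whence $\mathcal{V}\nabla_{X_1}X_2 + \mathcal{V}\nabla_{X_2}X_1 = -\lambda^2 g_1(X_1,X_2)\,grad_{\mathcal V}(1/\lambda^2)$, and combining with the bracket term yields the claimed formula for basic $X_1,X_2$; tensoriality of $\mathcal{A}$ in its arguments then extends it to arbitrary horizontal fields. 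The main obstacle I anticipate is the bookkeeping around which terms are genuinely vertical versus horizontal — in particular justifying that $[X_i,V]$ is vertical and that it is legitimate to assume $X_1,X_2$ basic — together with keeping the sign and the factor $\lambda^{-2}$ straight when passing between $V(\lambda^2)$ and $V(1/\lambda^2)$; the conformality computation itself is short once those reductions are in place.
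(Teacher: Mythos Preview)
Your argument is correct and is essentially the standard derivation of this formula (as in Baird--Wood or Gudmundsson): split $\mathcal{V}\nabla_{X_1}X_2$ into its skew part $\tfrac12\mathcal{V}[X_1,X_2]$ and its symmetric part, and identify the latter via $V(\lambda^2 g_1(X_1,X_2))=0$ for basic $X_1,X_2$. The paper itself does not supply a proof of this proposition; it is quoted in the Preliminaries as a known result, so there is no in-paper argument to compare against. Your justification that $[X_i,V]$ is vertical for basic $X_i$ (via $f$-relatedness to the zero field), the sign bookkeeping $\lambda^{-2}V(\lambda^2)=-\lambda^{2}V(1/\lambda^{2})$, and the final extension from basic to arbitrary horizontal fields by tensoriality of both sides are all sound; note for the last point that $\mathcal{V}[hX_1,X_2]=h\,\mathcal{V}[X_1,X_2]$ because the extra term $-X_2(h)X_1$ is horizontal, so the right-hand side is indeed $C^\infty(M_1)$-bilinear and the extension is legitimate.
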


Let $f:\left(M_{1},g_{1}\right)\rightarrow\left(M_{2},g_{2}\right)$ be a smooth map between $\left(M_{1},g_{1}\right)$ and $\left(M_{2},g_{2}\right)$  Riemannian manifolds. Then the second fundamental form of $f$ is given by
\begin{align}\label{2.9}
\left(\nabla f_{*}\right)\left(E_{1},E_{2}\right)=\nabla^{f}_{E_{1}}f_{*}(E_{2})-f_{*}\left(\bar{\nabla}_{E_{1}}E_{2}\right)
\end{align}
for any $E_{1},E_{2}\in\Gamma\left(TM_{1}\right)$. It is known that the second fundamental form $f$ is symmetric \cite{baird}.
\begin{lemma}
	Suppose that $f:M_{1}\rightarrow M_{2}$ is a horizontally conformal submersion. Then for $X_{1},X_{2}\in \Gamma\left(\left(\ker f_{*}\right)^{\perp}\right)$ and $U_{1},U_{2}\in \Gamma\left(\ker f_{*}\right)$ we have
	\begin{enumerate}[i.]
		\item $\left(\nabla f_{*}\right)\left(X_{1},X_{2}\right)=X_{1}\left(\ln\lambda\right)f_{*}X_{2}+X_{2}\left(\ln\lambda\right)f_{*}X_{1}-g_{1}\left(X_{1},X_{2}\right)f_{*}\left(\nabla\ln\lambda\right)$
		\item $\left(\nabla f_{*}\right)\left(U_{1},U_{2}\right)=-f_{*}\left(\mathcal{T}_{U_{1}}U_{2}\right)$
		\item $\left(\nabla f_{*}\right)\left(X_{1},U_{1}\right)=-f_{*}\left(\bar{\nabla}_{X_{1}}U_{1}\right)=-f_{*}\left(\mathcal{A}_{X_{1}}V_{1}\right)$.
	\end{enumerate}
\end{lemma}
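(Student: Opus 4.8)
The three identities all follow by unwinding the definition \eqref{2.9} of the second fundamental form together with the decompositions \eqref{2.4}--\eqref{2.7} and the horizontally conformal relation $g_{2}(f_{*}X,f_{*}Y)=\lambda^{2}g_{1}(X,Y)$ on $(\ker f_{*})^{\perp}$; only part~(i) requires real work. For~(i), the plan is to pair $(\nabla f_{*})(X_{1},X_{2})$ against $f_{*}Z$ for an arbitrary horizontal $Z$ and then invoke that, $f$ being a submersion, the vectors $f_{*}Z$ span the fibres of $f^{-1}TM_{2}$, so that these pairings determine $(\nabla f_{*})(X_{1},X_{2})$. First I would differentiate $g_{2}(f_{*}X_{2},f_{*}Z)=\lambda^{2}g_{1}(X_{2},Z)$ along $X_{1}$: on the left use that the pullback connection $\nabla^{f}$ is metric and write $\nabla^{f}_{X_{1}}f_{*}X_{2}=(\nabla f_{*})(X_{1},X_{2})+f_{*}(\nabla_{X_{1}}X_{2})$, noting $g_{2}(f_{*}(\nabla_{X_{1}}X_{2}),f_{*}Z)=\lambda^{2}g_{1}(\nabla_{X_{1}}X_{2},Z)$ since $f_{*}$ kills the vertical part of $\nabla_{X_{1}}X_{2}$ and $Z$ is horizontal; on the right use the Leibniz rule for $g_{1}$. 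The terms $\lambda^{2}g_{1}(\nabla_{X_{1}}X_{2},Z)$ and $\lambda^{2}g_{1}(X_{2},\nabla_{X_{1}}Z)$ cancel, leaving
\[
X_{1}(\lambda^{2})\,g_{1}(X_{2},Z)=g_{2}\big((\nabla f_{*})(X_{1},X_{2}),f_{*}Z\big)+g_{2}\big((\nabla f_{*})(X_{1},Z),f_{*}X_{2}\big).
\]
Writing this identity for the three cyclic permutations of $(X_{1},X_{2},Z)$ and forming the combination (first)$+$(second)$-$(third), the symmetry of $\nabla f_{*}$ isolates $2\,g_{2}\big((\nabla f_{*})(X_{1},X_{2}),f_{*}Z\big)$ on the right. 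I would finish by substituting $X_{1}(\lambda^{2})=2\lambda^{2}X_{1}(\ln\lambda)$, $\lambda^{2}g_{1}(X_{2},Z)=g_{2}(f_{*}X_{2},f_{*}Z)$, and $\lambda^{2}Z(\ln\lambda)=g_{2}\big(f_{*}(\nabla\ln\lambda),f_{*}Z\big)$ (the last because $Z$ is horizontal), which recasts the right-hand side as $g_{2}$ of the asserted field $X_{1}(\ln\lambda)f_{*}X_{2}+X_{2}(\ln\lambda)f_{*}X_{1}-g_{1}(X_{1},X_{2})f_{*}(\nabla\ln\lambda)$ against $f_{*}Z$; as $Z$ is arbitrary horizontal, (i) follows.

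Parts~(ii) and~(iii) are then immediate consequences of \eqref{2.9}. Since $U_{1},U_{2}\in\Gamma(\ker f_{*})$, we have $f_{*}U_{2}=0$, hence $(\nabla f_{*})(U_{1},U_{2})=-f_{*}(\nabla_{U_{1}}U_{2})$; substituting \eqref{2.4}, $\nabla_{U_{1}}U_{2}=\mathcal{T}_{U_{1}}U_{2}+\bar{\nabla}_{U_{1}}U_{2}$ with $\bar{\nabla}_{U_{1}}U_{2}$ vertical and therefore annihilated by $f_{*}$, yields~(ii). Likewise $f_{*}U_{1}=0$ gives $(\nabla f_{*})(X_{1},U_{1})=-f_{*}(\nabla_{X_{1}}U_{1})$, and \eqref{2.6} together with the fact that $\mathcal{V}\nabla_{X_{1}}U_{1}$ is killed by $f_{*}$ gives $(\nabla f_{*})(X_{1},U_{1})=-f_{*}(\mathcal{A}_{X_{1}}U_{1})$, which is~(iii).

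The only genuine obstacle is the bookkeeping in~(i): one has to keep careful track of which vertical contributions are annihilated by $f_{*}$, use that $\nabla^{f}$ is metric so the cyclic-sum (Koszul-type) manipulation is legitimate, and justify that $\{f_{*}Z:Z\in\Gamma((\ker f_{*})^{\perp})\}$ spans the fibres of $f^{-1}TM_{2}$ so that matching inner products against all such $Z$ determines $(\nabla f_{*})(X_{1},X_{2})$ uniquely. The rest is routine.
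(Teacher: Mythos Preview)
Your argument is correct. Parts~(ii) and~(iii) are exactly the one-line computations you describe, and your Koszul-type derivation of~(i) is sound: differentiating the conformality relation, cycling, and using the symmetry of $\nabla f_{*}$ isolates the desired expression, while the surjectivity of $f_{*}$ on horizontal vectors lets you strip off $f_{*}Z$.

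There is, however, nothing to compare against: the paper does not supply its own proof of this lemma. It is recorded in the preliminaries as a standard fact (the surrounding material is attributed to \cite{baird}), and is then used as a tool in the proofs of the later theorems. So your proposal fills in a proof that the paper simply quotes from the literature.
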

 The smoooth map  $f$ is called a totally geodesic map if $\left(\nabla f_{*}\right)\left(E_{1},E_{2}\right)=0$ for $E_{1},E_{2}\in\Gamma(TM)$ \cite{baird}.
 
We assume that $g$ is a Riemannian metric tensor on the manifold $M=M_{1}\times M_{2}$ and the canonical foliations $D_{M_{1}}$ and $D_{M_{2}}$ intersect vertically everywhere. Then $g$ is the metric tensor of a usual product of Riemannian manifold if and only if $D_{M_{1}}$ and $D_{M_{2}}$ are totally geodesic foliations.

\section{Conformal Bi-Slant Submersions}

\begin{definition}
	Let $\left(M_{1},g_{1},J\right)$ be an almost Hermitian manifold and $\left(M_{2},g_{2}\right)$ a Riemannian manifold. A horizontal conformal submersion $f:M_{1}\longrightarrow M_{2}$ is called a conformal bi-slant submersion if $D$ and $\bar{D}$ are slant distributions with the slant angles $\theta$ and $\bar{\theta}$, respectively, such that $\ker f_{*}=D\oplus \bar{D}$. $f$ is called proper if its slant angles satisfy $\theta,\bar{\theta}\neq 0,\frac{\pi}{2}$.
\end{definition}
 We now give a example of a proper conformal bi-slant submersion.
 \begin{example}
  We consider  the compatible almost complex structure $J_{\omega}$ on $\mathbb{R}^{8}$ such that
 \begin{align*}
 J_{\omega}=\left(\cos\omega\right) J_{1}+\left(\sin\omega\right) J_{2}, \ 0<\omega\leq\frac{\pi}{2}
 \end{align*}
 where
 \begin{align*}
 J_{1}\left(x_{1},x_{2},x_{3},x_{4},x_{5},x_{6},x_{7},x_{8}\right)=\left(-x_{2},x_{1},-x_{4},x_{3},-x_{6},x_{5},-x_{8},x_{7}\right)\\
 J_{2}\left(x_{1},x_{2},x_{3},x_{4},x_{5},x_{6},x_{7},x_{8}\right)=\left(-x_{3},x_{4},x_{1},-x_{2},-x_{7},x_{8},x_{5},-x_{6}\right)
 \end{align*}
 Consider a submersion $f:\mathbb{R}^{8} \rightarrow \mathbb{R}^{4}$ defined by
 \begin{align*}
 f\left(x_{1},x_{2},x_{3},x_{4},x_{5},x_{6},x_{7},x_{8}\right)=\pi^{5}\left(\frac{x_{1}-x_{3}}{\sqrt{2}},x_{4}, \frac{x_{5}-x_{6}}{\sqrt{2}},x_{7}\right)
 \end{align*}
 Then it follows that
 \begin{align*}
 D=span\{U_{1}=\frac{1}{\sqrt{2}}\left(\frac{\partial}{\partial x_{1}}+\frac{\partial}{\partial x_{3}}\right),U_{2}=\frac{\partial}{\partial x_{2}}\}\\
 \bar{D}=span\{U_{3}=\frac{1}{\sqrt{2}}\left(\frac{\partial}{\partial x_{5}}+\frac{\partial}{\partial x_{6}}\right),U_{4}=\frac{\partial}{\partial x_{8}}\}
 \end{align*}
 Thus $f$ is conformal bi-slant submersion with $\theta$ and $\bar{\theta}$ such that $\cos\theta=\frac{1}{\sqrt{2}}\cos\omega$ and $\cos\bar{\theta}=\frac{1}{\sqrt{2}}\sin\omega$.
 \end{example}

Suppose that $f$ is a conformal bi-slant submersion from a almost Hermitian manifold $\left(M_{1},g_{1},J_{1}\right)$ onto a Riemannian manifold $(M_{2},g_{2})$. For $U_{1}\in\Gamma\left(\ker f_{*}\right)$, we have
\begin{equation}\label{3.1}
U_{1}=\alpha U_{1}+\beta U_{1}
\end{equation}
where $\alpha U_{1}\in\Gamma\left(D_{1}\right)$ and $\beta U_{1}\in\Gamma\left(D_{2}\right)$.\\
Also, for $U_{1}\in\Gamma\left(\ker f_{*}\right)$, we write
\begin{equation}\label{3.2}
JU_{1}=\xi U_{1}+\eta U_{1}
\end{equation}
where $\xi U_{1}\in\Gamma\left(\ker f_{*}\right)$ and $\eta U_{1}\in\Gamma\left(\ker f_{*}\right)^\perp$.\\
For $X_{1}\in\Gamma\left(\left(\ker f_{*}\right)^\perp\right)$, we have
\begin{equation}\label{3.3}
JX_{1}=\mathcal{B}X_{1}+\mathcal{C}X_{1}
\end{equation}
where $\mathcal {B}X_{1}\in\Gamma\left(\ker f_{*}\right)$ and $\mathcal{C}X_{1}\in\Gamma\left(\left(\ker f_{*}\right)^\perp\right)$.\\
The horizontal distribution $(\ker f_{*})^{\perp}$ is decompesed as
\begin{align*}
(\ker f_{*})^{\perp}=\eta D_{1}\oplus\eta D_{2}\oplus\mu
\end{align*}
where $\mu$ is the complementary distribution to $\eta D_{1}\oplus\eta D_{2}$ in $(\ker f_{*})^{\perp}$.\\

Considering Definition 3.1 we can give the following result that we will use throughout the article.

\begin{theorem}
	Suppose that $f$ is a conformal bi-slant submersion from an almost Hermitian manifold $(M_{1},g_{1},J)$ onto a Riemannian manifold $(M_{2},g_{2})$. Then we have
	\begin{enumerate}[i)]
		\item $\xi^{2}U_{1}=-\left(\cos^{2}\theta\right)U_{1} \  \text{for} \  U_{1}\in\Gamma\left(D\right)$
		\item $\xi^{2}{V}_{1}=-\left(\cos^{2}\bar{\theta}\right)V_{1} \ \text{for} \ V_{1}\in\Gamma\left(\bar{D}\right)$
	\end{enumerate}

\end{theorem}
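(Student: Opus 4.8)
The plan is to establish (i) by a direct computation using the slant condition on $D$, and then note that (ii) follows by the identical argument applied to $\bar D$ with $\bar\theta$ in place of $\theta$. Recall that $D$ being a slant distribution with angle $\theta$ means that for every nonzero $U_1 \in \Gamma(D)$, the angle between $JU_1$ and the subspace $D$ (equivalently, between $JU_1$ and $\xi U_1$) is the constant $\theta$; here $\xi$ is the vertical part of $J$ restricted to $\ker f_*$ as in \eqref{3.2}, but one must first check that $\xi$ maps $D$ to $D$ (and $\bar D$ to $\bar D$), so that $\xi^2$ on $D$ makes sense as written. That invariance of $D$ under $\xi$ is where the $\alpha$, $\beta$ decomposition of \eqref{3.1} enters.

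First I would write, for $U_1 \in \Gamma(D)$, the defining relation of the slant angle in the form
\begin{align*}
\cos\theta = \frac{\|\xi U_1\|}{\|JU_1\|} = \frac{g_1(JU_1,\xi U_1)}{\|JU_1\|\,\|\xi U_1\|},
\end{align*}
and since $\|JU_1\| = \|U_1\|$ (as $J$ is an isometry by the almost Hermitian condition), this gives $\|\xi U_1\| = \cos\theta \, \|U_1\|$, i.e. $g_1(\xi U_1, \xi U_1) = \cos^2\theta\, g_1(U_1,U_1)$. Next I would use $J U_1 = \xi U_1 + \eta U_1$ together with $J^2 = -I$: applying $J$ gives $-U_1 = J\xi U_1 + J\eta U_1 = \xi^2 U_1 + \eta\xi U_1 + \mathcal B\eta U_1 + \mathcal C \eta U_1$. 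Taking the vertical component (using \eqref{3.3} for the decomposition of $J$ on the horizontal part) yields $-U_1 = \xi^2 U_1 + \mathcal B \eta U_1$, hence $\xi^2 U_1 = -U_1 - \mathcal B\eta U_1$. It then remains to identify $-U_1 - \mathcal B\eta U_1$ with $-\cos^2\theta\, U_1$, equivalently to show $\mathcal B\eta U_1 = -\sin^2\theta\, U_1$.

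To pin down that last identity I would take inner products: for $U_1, W_1 \in \Gamma(D)$, compute $g_1(\xi^2 U_1, W_1) = -g_1(\xi U_1, \xi W_1)$ using that $\xi$ (being, up to sign, the restriction of the skew part of $J$, and since $J$ is orthogonal while $\eta$ lands in the orthogonal complement) satisfies $g_1(\xi U_1, W_1) = -g_1(U_1, \xi W_1)$ on the vertical distribution — this antisymmetry follows from $g_1(JU_1,W_1) = -g_1(U_1,JW_1)$ and the fact that $g_1(\eta U_1, W_1) = 0$ for $W_1$ vertical. Combining with the norm relation $g_1(\xi U_1,\xi U_1) = \cos^2\theta\, g_1(U_1,U_1)$ extended by polarization to $g_1(\xi U_1,\xi W_1) = \cos^2\theta\, g_1(U_1,W_1)$, we get $g_1(\xi^2 U_1 + \cos^2\theta\, U_1, W_1) = 0$ for all $W_1 \in \Gamma(D)$; since $\xi^2 U_1 \in \Gamma(D)$ (from the previous paragraph, as $\mathcal B\eta U_1$ and $U_1$ are both in $\ker f_*$, and one checks the $\bar D$-component vanishes), nondegeneracy of $g_1$ on $D$ forces $\xi^2 U_1 = -\cos^2\theta\, U_1$, which is (i). Then (ii) is verbatim the same with $\bar D$, $\bar\theta$, $V_1$.

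The main obstacle I anticipate is the bookkeeping needed to justify that $\xi$ preserves $D$ (and $\bar D$) separately — i.e. that the vertical part of $JU_1$ for $U_1 \in D$ has no $\bar D$-component — since the theorem statement implicitly uses this when it writes $\xi^2 U_1$ as a vector in $D$. This should come from the fact that $D$ and $\bar D$ are themselves slant with respect to $J$ and from the $\alpha/\beta$ decomposition, but making it precise requires a short argument (for instance, showing $g_1(\xi U_1, V_1) = 0$ for $U_1 \in D$, $V_1 \in \bar D$ via $g_1(JU_1,V_1) = -g_1(U_1,JV_1) = -g_1(U_1,\xi V_1)$ and an orthogonality-of-distributions input). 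Everything else is the standard slant-submersion computation, so once that invariance is in hand the proof is routine; I would therefore devote the most care to that point and treat the rest as a direct calculation.
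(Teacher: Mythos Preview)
Your argument is correct and is exactly the standard slant-distribution computation the paper has in mind: the paper's entire proof is the single sentence ``The proof of this theorem is similar to slant immersions \cite{chen}.'' On your worry about $\xi D\subseteq D$: in the bi-slant framework (cf.\ the paper's reference \cite{sayar3} on bi-slant submersions) this invariance is part of the definition of each slant distribution, so it is a standing hypothesis rather than something to be derived here.
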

\begin{proof}
	The proof of this theorem is similar to slant immersions \cite{chen}.
\end{proof}

\begin{theorem}
	Suppose that $f$ is a proper conformal bi-slant submersion from a Kaehlerian manifold $(M_{1},g_{1},J)$ onto a Riemannian manifold $(M_{2},g_{2})$ with slant functions $\theta,\bar{\theta}$. Then
	\begin{enumerate}
		\item[\textit{i)}] the distribution $D$ is integrable if and only if
	\begin{align*}
\lambda^{-2}g_{2}\left(\nabla f_{*}\left(U_{1},\eta U_{2}\right),f_{*}\eta V_{1}\right)
=&g_{1}\left(\mathcal{T}_{U_{2}}\eta\xi U_{1}-\mathcal{T}_{U_{1}}\eta\xi U_{2},V_{1}\right)\\&+g_{1}\left(\mathcal{T}_{U_{1}}\eta U_{2}-\mathcal{T}_{U_{2}}\eta U_{1},\xi V_{1}\right)\\&+\lambda^{-2}g_{2}\left(\nabla f_{*}\left(U_{2},\eta U_{1}\right),f_{*}\eta V_{1}\right).
\end{align*}
		\item[\textit{ii)}] the distribution $\bar{D}$ is integrable if and only if
			\begin{align*}
		\lambda^{-2}g_{2}\left(\nabla f_{*}\left(V_{1},\eta V_{2}\right),f_{*}\eta U_{1}\right)
		=&g_{1}\left(\mathcal{T}_{V_{2}}\eta\xi V_{1}-\mathcal{T}_{V_{1}}\eta\xi V_{2},U_{1}\right)\\&+g_{1}\left(\mathcal{T}_{V_{1}}\eta V_{2}-\mathcal{T}_{V_{2}}\eta V_{1},\xi U_{1}\right)\\&+\lambda^{-2}g_{2}\left(\nabla f_{*}\left(V_{2},\eta V_{1}\right),f_{*}\eta U_{1}\right).
		\end{align*}
	\end{enumerate}
	where $U_{1},U_{2}\in \Gamma\left(D\right)$, $V_{1},V_{2}\in \Gamma\left(\bar{D}\right)$.
\end{theorem}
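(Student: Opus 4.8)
The plan is to reduce integrability of $D$ to a bracket condition and then compute that bracket. Since $\ker f_{*}=D\oplus\bar D$ is the vertical distribution of a submersion, it is automatically integrable; hence for $U_{1},U_{2}\in\Gamma(D)$ the field $[U_{1},U_{2}]$ is vertical, and $D$ is integrable if and only if $[U_{1},U_{2}]$ has no $\bar D$-component, i.e. if and only if $g_{1}([U_{1},U_{2}],V_{1})=0$ for all $V_{1}\in\Gamma(\bar D)$. So I would aim to derive a closed expression for $g_{1}([U_{1},U_{2}],V_{1})$. Part (ii) is obtained from part (i) by interchanging the roles of $D$ and $\bar D$ (equivalently of $U$ and $V$, and of $\theta$ and $\bar\theta$), so I would only carry out (i) in detail.

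The computation starts from $g_{1}(\nabla_{U_{1}}U_{2},V_{1})$. Using that $J$ is an isometry of $g_{1}$ and that $M_{1}$ is Kaehlerian ($\nabla J=0$), this equals $g_{1}(\nabla_{U_{1}}JU_{2},JV_{1})$; I then split $JU_{2}=\xi U_{2}+\eta U_{2}$, with $\xi U_{2}\in\Gamma(D)$ because $D$ is slant, and $JV_{1}=\xi V_{1}+\eta V_{1}$. Repeating the isometry/Kaehler trick on the purely vertical term $g_{1}(\nabla_{U_{1}}\xi U_{2},\xi V_{1})$ and applying Theorem 3.1 ($\xi^{2}=-\cos^{2}\theta$ on $D$) regenerates a multiple $\cos^{2}\theta\,g_{1}(\nabla_{U_{1}}U_{2},V_{1})$ of the original quantity, together with terms involving $\eta\xi U_{2}$; moving this multiple to the left leaves $\sin^{2}\theta\,g_{1}(\nabla_{U_{1}}U_{2},V_{1})$, and here the properness hypothesis $\theta\neq0,\tfrac{\pi}{2}$ is exactly what lets me divide by $\sin^{2}\theta$ (equivalently, keep $\sin^{2}\theta$ as a harmless nonzero factor in the final statement).

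Next I would treat the remaining terms by type. The mixed terms, where $\nabla$ has one vertical and one horizontal argument, are handled with the structure equations (the O'Neill-type equations for $\mathcal T$ and $\mathcal A$) together with the skew-symmetry of $\mathcal T_{U_{1}}$ and the symmetry of $\mathcal T$ on vertical fields; these produce exactly the combinations $\mathcal T_{U_{1}}\eta\xi U_{2}$ and $\mathcal T_{U_{1}}\eta U_{2}$ that appear in the statement. The genuinely horizontal term $g_{1}(\mathcal H\nabla_{U_{1}}\eta U_{2},\eta V_{1})$ I would push down to $M_{2}$: since $\eta V_{1}$ is horizontal, horizontal conformality gives $g_{1}(\mathcal H\nabla_{U_{1}}\eta U_{2},\eta V_{1})=\lambda^{-2}g_{2}(f_{*}\nabla_{U_{1}}\eta U_{2},f_{*}\eta V_{1})$, and then the definition of the second fundamental form (equivalently Lemma 2.1 in the mixed case) rewrites $f_{*}\nabla_{U_{1}}\eta U_{2}$ through $(\nabla f_{*})(U_{1},\eta U_{2})$; this is the source of the terms $\lambda^{-2}g_{2}((\nabla f_{*})(U_{1},\eta U_{2}),f_{*}\eta V_{1})$ in the statement. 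Finally I would antisymmetrize in $U_{1},U_{2}$ to assemble $g_{1}([U_{1},U_{2}],V_{1})$, collecting the $\mathcal T$-terms into $\mathcal T_{U_{2}}\eta\xi U_{1}-\mathcal T_{U_{1}}\eta\xi U_{2}$ and $\mathcal T_{U_{1}}\eta U_{2}-\mathcal T_{U_{2}}\eta U_{1}$, and read off that $D$ is integrable if and only if the displayed identity holds.

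I expect the main obstacle to be precisely this horizontal term: one must keep very careful track of which pieces are vertical and which are horizontal when the arguments of $\nabla$ are of mixed type, and make sure that the extra pullback-connection contributions coming out of $(\nabla f_{*})$ cancel correctly after antisymmetrization (this uses the symmetry of $(\nabla f_{*})$ and the identity $f_{*}(\mathcal A_{X}U)=-(\nabla f_{*})(X,U)$ from Lemma 2.1). Everything else is a mechanical, if somewhat long, bookkeeping exercise with $\nabla J=0$, the decompositions $JU=\xi U+\eta U$ and $JX=\mathcal B X+\mathcal C X$, Theorem 3.1, and the structure equations.
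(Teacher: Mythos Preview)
Your plan is correct and matches the paper's proof almost exactly: the paper also starts from $g_{1}([U_{1},U_{2}],V_{1})$, uses $\nabla J=0$ and the $J$-isometry to write everything through $\xi U_{2},\eta U_{2}$, applies Theorem~3.1 to extract the $\sin^{2}\theta$ factor, then invokes \eqref{2.5} for the $\mathcal{T}$-terms and the second fundamental form \eqref{2.9} for the horizontal piece, with part (ii) declared ``similar''. One small point: when you ``repeat the trick'' you should apply it to $g_{1}(\nabla_{U_{1}}\xi U_{2},JV_{1})$ as a whole (so that $J\xi U_{2}=\xi^{2}U_{2}+\eta\xi U_{2}$ and Theorem~3.1(i) gives $\cos^{2}\theta$), not after first splitting off $\xi V_{1}$, since $\xi^{2}V_{1}$ would bring in $\bar\theta$ instead; apart from that your outline is the paper's argument.
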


\begin{proof}	
$i)$ From $U_{1},U_{2} \in \Gamma\left(D\right)$ and $V_{1} \in \Gamma\left(\bar{D}\right)$ we have
	\setlength\arraycolsep{2pt}
	\begin{align*}
	g_{1}\left([U_{1},U_{2}],V_{1}\right)=&g_{1}\left(\nabla_{U_{1}}\xi U_{2},J V_{1}\right)+g_{1}\left(\nabla_{U_{1}}\eta U_{2},J V_{1}\right)\\&-g_{1}\left(\nabla_{U_{2}}\xi U_{1},J V_{1}\right)-g_{1}\left(\nabla_{U_{2}}\eta U_{1},J V_{1}\right).
	\end{align*}
	Considering Theorem 3.1 we arrive
	\begin{align*}
	\sin^{2}\theta g_{1}\left([U_{1},U_{2}],V_{1}\right)
	=&-g_{1}\left(\nabla_{U_{1}}\eta\xi U_{2},V_{1}\right)+g_{1}\left(\nabla_{U_{1}}\eta U_{2}, JV_{1}\right)\\&+g_{1}\left(\nabla_{U_{2}}\eta\xi U_{1},V_{1}\right)-g_{1}\left(\nabla_{U_{2}}\eta U_{1},JV_{1}\right).
	\end{align*}
	By using the equation \eqref{2.5} we obtain
	\begin{align*}
	\sin^{2}\theta g_{1}\left([U_{1},U_{2}],V_{1}\right)
	=&g_{1}\left(\mathcal{T}_{U_{2}}\eta\xi U_{1}-\mathcal{T}_{U_{1}}\eta\xi U_{2},V_{1}\right)+g_{1}\left(\mathcal{T}_{U_{1}}\eta U_{2}-\mathcal{T}_{U_{2}}\eta U_{1},\xi V_{1}\right)\\&-\lambda^{-2}g_{2}\left(\nabla f_{*}\left(U_{1},\eta U_{2}\right),f_{*}\eta V_{1}\right)\\&+\lambda^{-2}g_{2}\left(\nabla f_{*}\left(U_{2},\eta U_{1}\right),f_{*}\eta V_{1}\right).
	\end{align*}
	The proof of $ii)$ can be made by applying similar calculations.
\end{proof}

\begin{theorem}
	Suppose that $f$ is a proper conformal bi-slant submersion from a Kaehlerian manifold $(M_{1},g_{1},J)$ onto a Riemannian manifold $(M_{2},g_{2})$ with slant functions $\theta,\bar{\theta}$. Then the distribution $D$ defines a totally geodesic foliation if and only if
	\small{
	\begin{align}
\lambda^{-2}g_{2}\left(\nabla f_{*}\left(\eta U_{2},U_{1}\right),f_{*}\eta V_{1}\right)=-g_{1}\left(\mathcal{T}_{U_{1}}\eta\xi U_{2},V_{1}\right)+g_{1}\left(\mathcal{T}_{U_{1}}\eta U_{2},\xi V_{1}\right).
\end{align}}\normalsize
	and
	\begin{align}
\lambda^{-2}g_{2}\left(\nabla^{f}_{X_{1}}f_{*}\eta U_{1},f_{*}\eta U_{2}\right)=&-\sin^{2}\theta g_{1}\left(\left[U_{1},X_{1}\right],U_{1}\right)+g_{1}\left(\mathcal{A}_{X_{1}}\eta\xi U_{1},U_{2}\right)\nonumber\\&+g_{1}\left(grad(\ln\lambda),X_{1}\right)g_{1}\left(\eta U_{1},\eta U_{2}\right)\nonumber\\&+g_{1}\left(grad(\ln\lambda),\eta U_{1}\right)g_{1}\left(X_{1},\eta U_{2}\right)\nonumber\\&+g_{1}\left(grad(\ln\lambda),\eta U_{2}\right)g_{1}\left(X_{1},\eta U_{1}\right)\nonumber\\&-g_{1}\left(\mathcal{A}_{X_{1}}\eta U_{1},\xi U_{2}\right)
	\end{align}
	where $U_{1},U_{2}\in \Gamma\left(D\right)$, $V_{1}\in \Gamma\left(\bar{D}\right)$ and $X_{1}\in\Gamma\left(\left(\ker f_{*}\right)^{\perp}\right)$.
\end{theorem}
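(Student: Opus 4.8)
The plan is to characterize the totally geodesic foliation condition for $D$ by testing $\nabla_{U_1}U_2$ against the two complementary pieces of $TM_1$, namely $\Gamma(\bar{D})$ (giving the first identity) and $\Gamma((\ker f_*)^\perp)$ (giving the second identity). Recall that $D$ defines a totally geodesic foliation precisely when $g_1(\nabla_{U_1}U_2, W) = 0$ for all $W$ orthogonal to $D$, i.e. for $W \in \Gamma(\bar{D})$ and $W \in \Gamma((\ker f_*)^\perp)$. So the whole proof reduces to computing these two inner products.

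For the first identity, I would start from $g_1(\nabla_{U_1}U_2, V_1)$ with $V_1 \in \Gamma(\bar{D})$, insert $U_2 = -J^2 U_2 = -J(\xi U_2 + \eta U_2)$ using \eqref{3.2}, and use the Kaehler condition $\nabla J = 0$ to move $J$ across the connection, writing things in terms of $\nabla_{U_1}\xi U_2$ and $\nabla_{U_1}\eta U_2$ paired with $JV_1$. Then apply Theorem 3.1 (i.e. $\xi^2 U_2 = -\cos^2\theta\, U_2$) to resolve the $\xi^2$ term, split $JV_1 = \xi V_1 + \eta V_1$, use the decomposition \eqref{2.4}--\eqref{2.5} to extract the $\mathcal{T}$ terms, and finally convert the remaining horizontal-component term $g_1(\mathcal{H}\nabla_{U_1}\eta U_2, \eta V_1)$ into $-\lambda^{-2} g_2\big((\nabla f_*)(U_1,\eta U_2), f_*\eta V_1\big)$ via Lemma 2.1 and \eqref{2.9}. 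This is essentially a streamlined version of the integrability computation in the proof of Theorem 3.2 with $U_1 = U_2$-type simplifications, so it should go through smoothly.

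For the second identity, I would compute $g_1(\nabla_{U_1}U_2, X_1)$ for $X_1 \in \Gamma((\ker f_*)^\perp)$; equivalently $-g_1(U_2, \nabla_{U_1}X_1)$, or I would work with $g_1(\nabla_{X_1}U_1, \text{something})$ after a bracket manipulation $g_1(\nabla_{U_1}U_2,X_1) = g_1([U_1,X_1],\text{-})\pm\dots$ so that the horizontally conformal structure can be brought in. The key tools here are \eqref{2.6}--\eqref{2.7} (to introduce $\mathcal{A}_{X_1}$), Theorem 3.1 again (producing the $\sin^2\theta$ factor and the $\eta\xi U_1$ term), and crucially Lemma 2.1(i), which is where the three $\mathrm{grad}(\ln\lambda)$ terms come from: when one writes $(\nabla f_*)(\eta U_1, \eta U_2)$ or differentiates $f_*$ of a horizontal field, the dilation derivative terms $X_1(\ln\lambda)f_*X_2 + X_2(\ln\lambda)f_*X_1 - g_1(X_1,X_2)f_*(\nabla\ln\lambda)$ appear, and upon converting back to $g_1$ via $\lambda^2 g_1 = g_2 \circ (f_* \times f_*)$ these become exactly the three symmetric $g_1(\mathrm{grad}(\ln\lambda),\cdot)g_1(\cdot,\cdot)$ terms in the statement.

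The main obstacle I anticipate is the bookkeeping in the second identity: correctly routing $J$ through the Kaehler identity, keeping the vertical/horizontal projections straight when $\eta\xi U_1$ (which is horizontal) gets hit by $\mathcal{A}_{X_1}$, and above all matching the three conformal-factor terms with the right signs and the right pairing of arguments. One has to be careful that the term $-\sin^2\theta\, g_1([U_1,X_1],U_1)$ emerges with $U_1$ (not $U_2$) in the last slot — this suggests the computation is done by first pairing $\nabla_{U_1}U_2$ (or rather $\nabla_{U_1}\xi U_1$-type expressions after applying Theorem 3.1 to a single vector field, then polarizing) and the asymmetry is an artifact of the derivation route rather than a symmetric final form. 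Verifying that the stated expression is indeed symmetric in $U_1, U_2$ (as it must be, since $\mathcal{T}$ and the Levi-Civita connection give a symmetric second fundamental form on the fibers) would be a useful sanity check, though I would not carry it out in full.
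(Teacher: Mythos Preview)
Your proposal is correct and follows essentially the same route as the paper: for the first identity the paper starts from $g_{1}(\nabla_{U_{1}}U_{2},V_{1})$, applies the Kaehler condition together with $J\xi U_{2}=\xi^{2}U_{2}+\eta\xi U_{2}$ and Theorem~3.1, then uses \eqref{2.5} and \eqref{2.9}; for the second it writes $g_{1}(\nabla_{U_{1}}U_{2},X_{1})=-g_{1}([U_{1},X_{1}],U_{2})-g_{1}(\nabla_{X_{1}}U_{1},U_{2})$, iterates the Kaehler/Theorem~3.1 step on $\nabla_{X_{1}}U_{1}$, and invokes \eqref{2.7} and Lemma~2.1(i) to produce the $\mathcal{A}_{X_{1}}$ and $\mathrm{grad}(\ln\lambda)$ terms. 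Your suspicion about the argument $U_{1}$ in $-\sin^{2}\theta\,g_{1}([U_{1},X_{1}],U_{1})$ is well founded: tracing the paper's computation shows this slot carries $U_{2}$, so the stated form appears to be a typographical slip rather than a genuine asymmetry.
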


\begin{proof}
	
	For $U_{1},U_{2}\in \Gamma\left(D\right)$ and $V_{1}\in\Gamma\left(\bar{D}\right)$ we have
	\begin{align*}
	g_{1}\left(\nabla_{U_{1}}U_{2},V_{1}\right)=&-g_{1}\left(\nabla_{U_{1}}\xi^{2}U_{2},V_{1}\right)-g_{1}\left(\nabla_{U_{1}}\eta\xi U_{2},V_{1}\right)+g_{1}\left(\nabla_{U_{1}}\eta U_{2},JV_{1}\right).
	\end{align*}
	Thus we can write
	\begin{align*}
	\sin^{2}\theta g_{1}\left(\nabla_{U_{1}}U_{2},V_{1}\right)=&-g_{1}\left(\mathcal{T}_{U_{1}}\eta\xi U_{2},V_{1}\right)+g_{1}\left(\mathcal{T}_{U_{1}}\eta U_{2},\xi V_{1}\right)\\&+g_{1}\left(\mathcal{H}\nabla_{U_{1}}\eta U_{2},\eta V_{1}\right).
	\end{align*}
	Using \eqref{2.9} we obtain 
	\begin{align*}
	\sin^{2}\theta g_{1}\left(\nabla_{U_{1}}U_{2},V_{1}\right)=&-g_{1}\left(\mathcal{T}_{U_{1}}\eta\xi U_{2},V_{1}\right)+g_{1}\left(\mathcal{T}_{U_{1}}\eta U_{2},\xi V_{1}\right)\\&-\lambda^{-2}g_{2}\left(\nabla f_{*}\left(\eta U_{2},U_{1}\right),f_{*}\eta V_{1}\right).
	\end{align*}
	which is first equation in Theorem 3.3.
	
	On the other hand any $U_{1},U_{2} \in \Gamma(D) $ and $X_{1}\in\Gamma\left(\left(\ker f_{*}\right)^{\perp}\right)$ we can write
	\setlength\arraycolsep{2pt}
	\begin{align*}
	g_{1}\left(\nabla_{U_{1}}U_{2},X_{1}\right)=&-g_{1}\left(\left[U_{1},X_{1}\right],U_{2}\right)-g_{1}\left(\nabla_{X_{1}}U_{1},U_{2}\right)\\
	=&-g_{1}\left(\left[U_{1},X_{1}\right],U_{2}\right)+g_{1}\left(\nabla_{X}J\xi U_{1},U_{2}\right)-g_{1}\left(\nabla_{X_{1}}\eta U_{1},J U_{2}\right).
	\end{align*}
	Using Theorem 3.1, we arrive following equation
	\begin{align*}
	g_{1}\left(\nabla_{U_{1}}U_{2},X_{1}\right)=&-g_{1}\left(\left[U_{1},X_{1}\right],U_{2}\right)-\cos^{2}\theta g_{1}\left(\nabla_{X_{1}}U_{1},U_{2}\right)\\&+g_{1}\left(\nabla_{X_{1}}\eta\xi U_{1},U_{2}\right)-g_{1}\left(\nabla_{X_{1}}\eta U_{1},JU_{2}\right)\nonumber
	\end{align*}
	From \eqref{2.7} and Lemma 2.1  we have
	\begin{align*}
	\sin^2\theta g_{1}\left(\nabla_{U_{1}}U_{2},X_{1}\right)=&-\sin^{2}\theta g_{1}\left(\left[U_{1},X_{1}\right],U_{1}\right)+g_{1}\left(\mathcal{A}_{X_{1}}\eta\xi U_{1},U_{2}\right)\\&-g_{1}\left(\mathcal{A}_{X_{1}}\eta U_{1},\xi U_{2}\right)-\lambda^{-2}g_{2}\left(\nabla^{f}_{X_{1}}f_{*}\eta U_{1},f_{*}\eta U_{2}\right)\\&+g_{1}\left(grad(\ln\lambda),X_{1}\right)g_{1}\left(\eta U_{1},\eta U_{2}\right)\\&+g_{1}\left(grad(\ln\lambda),\eta U_{1}\right)g_{1}\left(X_{1},\eta U_{2}\right)\\&+g_{1}\left(grad(\ln\lambda),\eta U_{2}\right)g_{1}\left(X_{1},\eta U_{1}\right)\nonumber
	\end{align*}
	This completes the proof.
\end{proof}

\begin{theorem}
	Suppose that $f$ is a proper conformal bi-slant submersion from a Kaehlerian manifold $(M_{1},g_{1},J)$ onto a Riemannian manifold $(M_{2},g_{2})$ with slant functions $\theta,\bar{\theta}$. Then the distribution $\bar{D}$ defines a totally geodesic foliation if and only if
	\small{
	\begin{align}
	\lambda^{-2}g_{2}\left(\nabla f_{*}\left(\eta V_{2},V_{1}\right),f_{*}\eta U_{1}\right)=-g_{1}\left(\mathcal{T}_{V_{1}}\eta\xi V_{2},U_{1}\right)+g_{1}\left(\mathcal{T}_{V_{1}}\eta V_{2},\xi U_{1}\right).
	\end{align}}\normalsize
	and
	\begin{align}
	\lambda^{-2}g_{2}\left(\nabla^{f}_{X_{1}}f_{*}\eta V_{1},f_{*}\eta V_{2}\right)=&-\sin^{2}\bar{\theta} g_{1}\left(\left[V_{1},X_{1}\right],V_{1}\right)+g_{1}\left(\mathcal{A}_{X_{1}}\eta\xi V_{1},V_{2}\right)\nonumber\\&+g_{1}\left(grad(\ln\lambda),X_{1}\right)g_{1}\left(\eta V_{1},\eta V_{2}\right)\nonumber\\&+g_{1}\left(grad(\ln\lambda),\eta V_{1}\right)g_{1}\left(X_{1},\eta V_{2}\right)\nonumber\\&+g_{1}\left(grad(\ln\lambda),\eta V_{2}\right)g_{1}\left(X_{1},\eta V_{1}\right)\nonumber\\&-g_{1}\left(\mathcal{A}_{X_{1}}\eta V_{1},\xi V_{2}\right)
	\end{align}
	where $U_{1}\in \Gamma\left(D\right)$, $V_{1}, V_{2}\in \Gamma\left(\bar{D}\right)$ and $X_{1}\in\Gamma\left(\left(\ker f_{*}\right)^{\perp}\right)$.
\end{theorem}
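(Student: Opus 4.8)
The plan is to mimic exactly the two-part structure used in the proof of Theorem 3.3, swapping the roles of $D$ and $\bar D$ and replacing $\theta$ by $\bar\theta$ throughout. Recall that $\bar D$ defines a totally geodesic foliation precisely when $g_1(\nabla_{V_1}V_2,W)=0$ for all $V_1,V_2\in\Gamma(\bar D)$ and all $W$ orthogonal to $\bar D$; since $\ker f_* = D\oplus\bar D$, it suffices to test $W$ against $U_1\in\Gamma(D)$ (this yields the first equation) and against $X_1\in\Gamma((\ker f_*)^\perp)$ (this yields the second).

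For the first equation I would start from $g_1(\nabla_{V_1}V_2,U_1)$, write $V_2 = -\xi^2 V_2 - \eta\xi V_2 + (\text{the } \eta V_2 \text{ piece handled via } J)$ using $JV_2 = \xi V_2 + \eta V_2$ and $J^2 = -I$, then invoke Theorem 3.1(ii) to replace $\xi^2 V_2$ by $-(\cos^2\bar\theta)V_2$. This produces a factor $\sin^2\bar\theta$ on the left and, on the right, terms $-g_1(\nabla_{V_1}\eta\xi V_2,U_1)$ and $g_1(\nabla_{V_1}\eta V_2,JU_1)$. Decomposing $\nabla_{V_1}\eta V_2$ into vertical and horizontal parts via \eqref{2.4}--\eqref{2.5}, using that $\mathcal T$ is skew-symmetric, and converting the horizontal component through Lemma 2.1(iii) together with \eqref{2.9} gives the stated $\lambda^{-2}g_2(\nabla f_*(\eta V_2,V_1),f_*\eta U_1)$ term. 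Since $\bar\theta\neq 0$ (properness), dividing by $\sin^2\bar\theta$ is legitimate.

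For the second equation I would compute $g_1(\nabla_{V_1}V_2,X_1)$ by first writing it as $-g_1([V_1,X_1],V_2)-g_1(\nabla_{X_1}V_1,V_2)$, then expanding $\nabla_{X_1}V_1$ through $J$ and $\nabla J = 0$ (Kaehler) to get $g_1(\nabla_{X_1}J\xi V_1,V_2) - g_1(\nabla_{X_1}\eta V_1,JV_2)$; applying Theorem 3.1(ii) again splits off a $\cos^2\bar\theta\, g_1(\nabla_{X_1}V_1,V_2)$ term that combines with the left side to give the $\sin^2\bar\theta$ factor. The remaining terms are handled with \eqref{2.6}--\eqref{2.7} (producing $\mathcal A_{X_1}\eta\xi V_1$ and $\mathcal A_{X_1}\eta V_1$), while $g_1(\mathcal H\nabla_{X_1}\eta V_1,\eta V_2)$ is rewritten via the second fundamental form as $-\lambda^{-2}g_2(\nabla^f_{X_1}f_*\eta V_1,f_*\eta V_2)$ plus the three $\mathrm{grad}(\ln\lambda)$ correction terms coming from Lemma 2.1(i) (with $X_1,\eta V_1$ as the horizontal arguments). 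The skew-symmetry of $\mathcal A_{X_1}$ converts $g_1(\mathcal A_{X_1}\eta V_1,\xi V_2)$ into the form appearing in the statement.

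The main obstacle will be bookkeeping rather than any conceptual difficulty: one must carefully track which pieces of $J V_i$ lie in $\ker f_*$ versus $(\ker f_*)^\perp$, keep the sign conventions of $\mathcal T$, $\mathcal A$, and $(\nabla f_*)$ consistent, and correctly separate the three distinct $\mathrm{grad}(\ln\lambda)$ contractions. Because the statement is the exact $\bar D$-analogue of Theorem 3.3, I would in practice write ``the proof is obtained by interchanging the roles of $D$ and $\bar D$ (hence $\theta$ and $\bar\theta$, $U_i$ and $V_i$) in the proof of Theorem 3.3'' and only reproduce the computation if a fully self-contained argument is required.
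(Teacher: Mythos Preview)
Your proposal is correct and matches the paper's own approach exactly: the paper proves Theorem~3.4 by stating that the argument is similar to that of Theorem~3.3, and your outline faithfully reproduces that computation with $D$ and $\bar D$ (and $\theta,\bar\theta$; $U_i,V_i$) interchanged. Your closing remark---that one may simply cite the proof of Theorem~3.3 with the roles swapped---is precisely what the paper does.
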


\begin{proof}
	The proof of this theorem is similar to the proof of Theorem 3.3.
\end{proof}

\begin{theorem}
	Suppose that $f$ is a proper conformal bi-slant submersion from a Kaehlerian manifold $(M_{1},g_{1},J)$ onto a Riemannian manifold $(M_{2},g_{2})$ with slant functions $\theta,\bar{\theta}$.Then, the vertical distribution $\left(\ker f_{*}\right)$ is a locally product $M_{D}\times M_{\bar{D}}$ if and only if the equations (3.4), (3.5), (3.6) and (3.7) are hold where $M_{D}$ and $M_{\bar{D}}$ are integral manifolds of the distributions $D$ and $\bar{D}$, respectively. 	
\end{theorem}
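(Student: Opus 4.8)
The plan is to reduce the statement to the characterization of a local Riemannian product via totally geodesic foliations, which was recalled at the end of Section~2: the vertical distribution $\ker f_*$, carrying the induced metric, splits as a local product $M_D\times M_{\bar D}$ precisely when both $D$ and $\bar D$ are integrable and each defines a totally geodesic foliation inside $\ker f_*$. Since $\ker f_* = D\oplus\bar D$ by Definition~3.1, the two canonical foliations of $\ker f_*$ are exactly the integral manifolds of $D$ and of $\bar D$, and they intersect vertically (in fact orthogonally) everywhere because $D\perp\bar D$. So the task is to show that the four displayed equations (3.4)--(3.7) are collectively equivalent to: $D$ is a totally geodesic foliation \emph{and} $\bar D$ is a totally geodesic foliation (note that a foliation which is totally geodesic is in particular integrable, so no separate integrability hypothesis is needed).

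First I would recall that $D$ defines a totally geodesic foliation exactly when $g_1(\nabla_{U_1}U_2, Z)=0$ for all $U_1,U_2\in\Gamma(D)$ and all $Z\in\Gamma((\ker f_*)\ominus D) \oplus \Gamma((\ker f_*)^\perp)$; but since $\nabla_{U_1}U_2$ need only be tested against vectors \emph{inside} $\ker f_*$ that are orthogonal to $D$ (the horizontal component is irrelevant for a foliation of $\ker f_*$ endowed with the induced metric — wait, more precisely one tests against $\Gamma(\bar D)$ only, because the second fundamental form of the leaves as submanifolds of $(M_1,g_1)$ decomposes, and for the \emph{intrinsic} product structure on $\ker f_*$ the relevant obstruction is the $\bar D$-component of $\nabla_{U_1}U_2$). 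Here is where I must be careful: Theorem~3.3 actually produces \emph{two} conditions, (3.4) testing $\nabla_{U_1}U_2$ against $V_1\in\Gamma(\bar D)$ and (3.5) testing it against $X_1\in\Gamma((\ker f_*)^\perp)$; together these say $\nabla_{U_1}U_2\in\Gamma(D)$ for all $U_1,U_2\in\Gamma(D)$, i.e.\ $D$ is a totally geodesic foliation of $M_1$ (hence a fortiori of $\ker f_*$). Symmetrically, Theorem~3.4 gives (3.6) and (3.7), which together say $\nabla_{V_1}V_2\in\Gamma(\bar D)$ for all $V_1,V_2\in\Gamma(\bar D)$, i.e.\ $\bar D$ is a totally geodesic foliation.

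So the proof is essentially an assembly: invoke Theorem~3.3 to get that (3.4)$\wedge$(3.5) $\iff$ $D$ is a totally geodesic foliation; invoke Theorem~3.4 to get that (3.6)$\wedge$(3.7) $\iff$ $\bar D$ is a totally geodesic foliation; then invoke the product criterion from Section~2 (applied with $M_{M_1}\rightsquigarrow D$, $M_{M_2}\rightsquigarrow\bar D$, $g\rightsquigarrow g_1|_{\ker f_*}$) to conclude that both holding simultaneously is equivalent to $\ker f_*$ being the local product $M_D\times M_{\bar D}$. One small point to address explicitly: for the Section~2 criterion to apply we need $D$ and $\bar D$ to be the leaves of two transverse foliations of $\ker f_*$; integrability of both is guaranteed because a totally geodesic distribution is involutive, and the "intersect vertically everywhere" hypothesis holds since $\ker f_* = D\oplus\bar D$ is an orthogonal direct sum by construction.

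\textbf{Main obstacle.} The only genuinely delicate point is bookkeeping about \emph{which} ambient space the total geodesicity is measured in: Theorems~3.3 and 3.4 were proved for leaves sitting inside $M_1$ (they control the horizontal component of $\nabla_{U_1}U_2$ via (3.5)/(3.7)), whereas the Section~2 product criterion is an intrinsic statement about $(\ker f_*, g_1|_{\ker f_*})$ with its Levi-Civita connection $\bar\nabla = \mathcal V\nabla$. I would handle this by noting that $\bar\nabla_{U_1}U_2 = \mathcal V\nabla_{U_1}U_2$ and that $D$ is $\bar\nabla$-totally geodesic in $\ker f_*$ iff $\mathcal V\nabla_{U_1}U_2\in\Gamma(D)$ iff $g_1(\nabla_{U_1}U_2,V_1)=0$ for all $V_1\in\Gamma(\bar D)$ — which is exactly equation (3.4) alone. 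Equation (3.5), testing against $X_1\in\Gamma((\ker f_*)^\perp)$, is then not needed for the intrinsic product structure; strictly it is the extra condition that makes $D$ totally geodesic as a foliation of the \emph{whole} $M_1$. I would therefore either (a) restate the theorem's conclusion as "$\ker f_*$ is locally $M_D\times M_{\bar D}$ iff (3.4) and (3.6) hold, and moreover the leaves are totally geodesic in $M_1$ iff additionally (3.5), (3.7) hold", or (b) follow the paper's evident intention and simply package all four together, remarking that (3.4)+(3.5) $\Leftrightarrow$ $D$ totally geodesic foliation and (3.6)+(3.7) $\Leftrightarrow$ $\bar D$ totally geodesic foliation, then quote the Section~2 result. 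Given the style of the paper, option (b) is the expected route, and beyond this interpretive care the argument is a direct citation of the two preceding theorems plus the product lemma, with no new computation.
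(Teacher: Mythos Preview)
The paper gives no explicit proof of this theorem; it is stated as an immediate consequence of Theorems~3.3 and~3.4 together with the local product criterion recalled at the end of Section~2, and your proposal reconstructs exactly this intended argument. Your option~(b) is the paper's route, and the subtlety you flag about intrinsic versus extrinsic total geodesicity is a legitimate refinement that the paper simply does not address.
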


\begin{theorem}
Suppose that $f$ is a proper conformal bi-slant submersion from a Kaehlerian manifold $(M_{1},g_{1},J)$ onto a Riemannian manifold $(M_{2},g_{2})$ with slant functions $\theta,\bar{\theta}$. Then the distribution $\left(\ker f_{*}\right)^\perp$ defines a totally geodesic foliation if and only if
\begin{align}
\lambda^{-2}g_{2}\left(\nabla^{f}_{X_{1}}f_{*}\eta U_{1},f_{*}CX_{2}\right)=&-g_{1}\left(\mathcal{A}_{X_{1}}\eta U_{1},BX_{2}\right)
+\lambda^{-2}g_{2}\left(\nabla^{f}_{X_{1}}f_{*}\eta\xi U_{1},f_{*}X_{2}\right)\nonumber\\&-g_{1}\left(grad\ln\lambda,X_{1}\right)g_{1}\left(\eta\xi U_{1},X_{2}\right)\nonumber\\&-g_{1}\left(grad\ln\lambda,\eta\xi U_{1}\right)g_{1}\left(X_{1},X_{2}\right)\nonumber\\&+g_{1}\left(X_{1},\eta\xi U_{1}\right)g_{1}\left(grad\ln\lambda,X_{2}\right)\nonumber\\&+g_{1}\left(grad\ln\lambda,\eta U_{1}\right)g_{1}\left(X_{1},CX_{2}\right)\nonumber\\&-g_{1}\left(X_{1},\eta U_{1}\right)g_{1}\left(grad\ln\lambda,CX_{2}\right).
\end{align}
and
\begin{align}
\lambda^{-2}g_{2}\left(\nabla^{f}_{X_{1}}f_{*}\eta V_{1},f_{*}CX_{2}\right)=&-g_{1}\left(\mathcal{A}_{X_{1}}\eta V_{1},BX_{2}\right)
+\lambda^{-2}g_{2}\left(\nabla^{f}_{X_{1}}f_{*}\eta\xi V_{1},f_{*}X_{2}\right)\nonumber\\&-g_{1}\left(grad\ln\lambda,X_{1}\right)g_{1}\left(\eta\xi V_{1},X_{2}\right)\nonumber\\&-g_{1}\left(grad\ln\lambda,\eta\xi V_{1}\right)g_{1}\left(X_{1},X_{2}\right)\nonumber\\&+g_{1}\left(X_{1},\eta\xi V_{1}\right)g_{1}\left(grad\ln\lambda,X_{2}\right)\nonumber\\&+g_{1}\left(grad\ln\lambda,\eta V_{1}\right)g_{1}\left(X_{1},CX_{2}\right)\nonumber\\&-g_{1}\left(X_{1},\eta V_{1}\right)g_{1}\left(grad\ln\lambda,CX_{2}\right).
\end{align}
	where $X_{1}, X_{2} \in\Gamma\left(\ker f_{*}\right)^\perp$, $U_{1}\in\Gamma\left(D\right)$ and $V_{1}\in\Gamma\left(\bar{D}\right)$.
\end{theorem}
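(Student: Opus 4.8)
The starting point is the classical criterion: $(\ker f_*)^{\perp}$ defines a totally geodesic foliation if and only if $g_1(\nabla_{X_1}X_2,W)=0$ for all $X_1,X_2\in\Gamma\left((\ker f_*)^{\perp}\right)$ and all $W\in\Gamma(\ker f_*)$. Since $\ker f_*=D\oplus\bar{D}$ it suffices to test $W=U_1\in\Gamma(D)$ (which will give $(3.8)$) and $W=V_1\in\Gamma(\bar{D})$ (which will give $(3.9)$); the second computation is the first one verbatim after the substitutions $D\leftrightarrow\bar{D}$, $\theta\leftrightarrow\bar{\theta}$, $U\leftrightarrow V$, so I would carry out only the case $W=U_1\in\Gamma(D)$.

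The reduction uses Theorem 3.1. Applying $J$ to $JU_1=\xi U_1+\eta U_1$ and separating vertical and horizontal parts, together with $\xi^{2}U_1=-\cos^{2}\theta\,U_1$, gives $\mathcal{B}\eta U_1=-\sin^{2}\theta\,U_1$ and $\mathcal{C}\eta U_1=-\eta\xi U_1$, so $\sin^{2}\theta\,U_1=-J\eta U_1-\eta\xi U_1$ and
\[
\sin^{2}\theta\,g_1(\nabla_{X_1}X_2,U_1)=-g_1(\nabla_{X_1}X_2,J\eta U_1)-g_1(\nabla_{X_1}X_2,\eta\xi U_1).
\]
The point is that $\eta U_1$ and $\eta\xi U_1$ are horizontal, so from here on the conformality relation $\lambda^{2}g_1=g_2(f_*\cdot,f_*\cdot)$, the second fundamental form $(2.9)$, and Lemma 2.1(i) become available. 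In the first term I would move $J$ across $\nabla$ by the Kaehler condition $\nabla J=0$, getting $-g_1(\nabla_{X_1}X_2,J\eta U_1)=g_1(\nabla_{X_1}\mathcal{B}X_2,\eta U_1)+g_1(\nabla_{X_1}\mathcal{C}X_2,\eta U_1)$. In the first summand $\mathcal{B}X_2$ is vertical, so $g_1(\mathcal{B}X_2,\eta U_1)=0$ and Leibniz together with $(2.7)$ turn it into $-g_1(\mathcal{A}_{X_1}\eta U_1,\mathcal{B}X_2)$. In the second summand $\mathcal{C}X_2$ is horizontal, and I would write $g_1(\nabla_{X_1}\mathcal{C}X_2,\eta U_1)=X_1\big(g_1(\mathcal{C}X_2,\eta U_1)\big)-g_1\big(\mathcal{C}X_2,\mathcal{H}\nabla_{X_1}\eta U_1\big)$, convert the last inner product by conformality and $(2.9)$ into $\lambda^{-2}g_2\big(f_*\mathcal{C}X_2,\nabla^{f}_{X_1}f_*\eta U_1-(\nabla f_*)(X_1,\eta U_1)\big)$, and expand $(\nabla f_*)(X_1,\eta U_1)$ via Lemma 2.1(i); this produces $-\lambda^{-2}g_2\big(\nabla^{f}_{X_1}f_*\eta U_1,f_*\mathcal{C}X_2\big)$, the $\ln\lambda$-terms attached to $\mathcal{C}X_2$, and a remainder $X_1\big(g_1(\mathcal{C}X_2,\eta U_1)\big)$. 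The term $-g_1(\nabla_{X_1}X_2,\eta\xi U_1)$, with $\eta\xi U_1$ horizontal, is treated identically (Leibniz, $(2.7)$, conformality, $(2.9)$, Lemma 2.1(i)) and yields $+\lambda^{-2}g_2\big(\nabla^{f}_{X_1}f_*\eta\xi U_1,f_*X_2\big)$, the $\ln\lambda$-terms attached to $\eta\xi U_1$, and a remainder $-X_1\big(g_1(X_2,\eta\xi U_1)\big)$. Finally the identity $g_1(\mathcal{C}X_2,\eta U_1)=-g_1(X_2,\mathcal{C}\eta U_1)=g_1(X_2,\eta\xi U_1)$, which holds because $\mathcal{C}$ is $g_1$-skew-adjoint on the horizontal bundle and $\mathcal{C}\eta U_1=-\eta\xi U_1$, cancels the two remainders (and identifies the matching $\ln\lambda$-terms); dividing by $\sin^{2}\theta\neq 0$, where properness is used, gives $(3.8)$, and the case $W=V_1$ gives $(3.9)$ the same way.

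The main obstacle is organisational rather than conceptual: the expansions of the two ``$\mathcal{C}X_2$-type'' terms must be performed in the order above so that the images of the pullback connection pair as $\nabla^{f}_{X_1}f_*\eta U_1$ with $f_*\mathcal{C}X_2$ and as $\nabla^{f}_{X_1}f_*\eta\xi U_1$ with $f_*X_2$ (rather than transposed), and one must then verify that the non-tensorial remainders $X_1\big(g_1(\cdot,\cdot)\big)$ genuinely cancel via the slant identity above; beyond that the argument is a routine application of $(2.5)$--$(2.7)$, $(2.9)$, Lemma 2.1 and Theorem 3.1.
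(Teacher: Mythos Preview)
Your proposal is correct and follows essentially the same route as the paper: reduce the totally geodesic condition to $g_{1}(\nabla_{X_{1}}X_{2},U_{1})=0$ for $U_{1}\in D$ (resp.\ $V_{1}\in\bar{D}$), use the Kaehler condition together with Theorem~3.1 to extract the factor $\sin^{2}\theta$, and then rewrite the remaining horizontal terms $g_{1}(\mathcal{H}\nabla_{X_{1}}\eta\xi U_{1},X_{2})$ and $g_{1}(\mathcal{H}\nabla_{X_{1}}\eta U_{1},CX_{2})$ via conformality, (2.9) and Lemma~2.1(i). The only organisational difference is that the paper reaches these two horizontal terms directly (by first moving $\nabla_{X_{1}}$ onto $U_{1}$ and then iterating $J$), so no Leibniz remainders $X_{1}\big(g_{1}(\cdot,\cdot)\big)$ ever appear; your detour through $g_{1}(\nabla_{X_{1}}CX_{2},\eta U_{1})$ produces such remainders and then cancels them via $g_{1}(CX_{2},\eta U_{1})=g_{1}(X_{2},\eta\xi U_{1})$, which is harmless but unnecessary.
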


\begin{proof}
	For $X_{1},X_{2} \in\Gamma\left(\ker \pi_{*}\right)^\perp$ and $U_{1}\in\Gamma\left(D\right)$ we can write
	\setlength\arraycolsep{2pt}
	\begin{eqnarray*}
	g_{1}\left(\nabla_{X_{1}}X_{2},U_{1}\right)=-g_{1}\left(\nabla_{X_{1}}\xi U_{1}, JX_{2}\right)-g_{1}\left(\nabla_{X_{1}}\eta U_{1},JX_{2}\right)
	\end{eqnarray*}
	From Theorem 3.1 we have
	\begin{align*}
	g_{1}\left(\nabla_{X_{1}}X_{2},U_{1}\right)=&-\cos^{2}\theta g_{1}\left(\nabla_{X_{1}}U_{1},X_{2}\right)+g_{1}\left(\nabla_{X_{1}}\eta\xi U_{1},X_{2}\right)\\&-g_{1}\left(\nabla_{X_{1}}\eta U_{1},JX_{2}\right)
	\end{align*}
	By using the equation (2.7) we derive
	\begin{align*}
	\sin^{2}\theta g\left(\nabla_{X_{1}}X_{2},U_{1}\right)=&g\left(\mathcal{H}\nabla_{X_{1}}\eta\xi U_{1},X_{2}\right)
	-g\left(\mathcal{H}\nabla_{X_{1}}\eta U_{1},CX_{2}\right)\\&-g\left(\nabla_{X_{1}}\eta U_{1},BX_{2}\right).
	\end{align*}
	Then it follows from Lemma 2.1 that
	\begin{align*}
	\sin^{2}\theta g_{1}\left(\nabla_{X_{1}}X_{2},U_{1}\right)=&-g_{1}\left(\mathcal{A}_{X_{1}}\eta U_{1},BX_{2}\right)
	+\lambda^{-2}g_{2}\left(\nabla^{f}_{X_{1}}f_{*}\eta\xi U_{1},f_{*}X_{2}\right)\\&-g_{1}\left(grad\ln\lambda,X_{1}\right)g_{1}\left(\eta\xi U_{1},X_{2}\right)\\&-g_{1}\left(grad\ln\lambda,\eta\xi U_{1}\right)g_{1}\left(X_{1},X_{2}\right)\\&+g_{1}\left(X_{1},\eta\xi U_{1}\right)g_{1}\left(grad\ln\lambda,X_{2}\right)\\&-\lambda^{-2}g_{2}\left(\nabla^{f}_{X_{1}}f_{*}\eta U_{1},f_{*}CX_{2}\right)\\&+g_{1}\left(grad\ln\lambda,\eta U_{1}\right)g_{1}\left(X_{1},CX_{2}\right)\\&-g_{1}\left(X_{1},\eta U_{1}\right)g_{1}\left(grad\ln\lambda,CX_{2}\right).
	\end{align*}
	Thus we have the first desired equation.
	Similarly for  $X_{1},X_{2} \in\Gamma\left(\left(\ker \pi_{*}\right)^\perp\right)$ and $V_{1}\in\left(\bar{D}\right)$ we find
	\begin{align*}
	\sin^{2}\bar{\theta} g_{1}\left(\nabla_{X_{1}}X_{2},V_{1}\right)=&-g_{1}\left(\mathcal{A}_{X_{1}}\eta V_{1},BX_{2}\right)
+\lambda^{-2}g_{2}\left(\nabla^{f}_{X_{1}}f_{*}\eta\xi V_{1},f_{*}X_{2}\right)\\&-g_{1}\left(grad\ln\lambda,X_{1}\right)g_{1}\left(\eta\xi V_{1},X_{2}\right)\\&-g_{1}\left(grad\ln\lambda,\eta\xi V_{1}\right)g_{1}\left(X_{1},X_{2}\right)\\&+g_{1}\left(X_{1},\eta\xi V_{1}\right)g_{1}\left(grad\ln\lambda,X_{2}\right)\\&-\lambda^{-2}g_{2}\left(\nabla^{f}_{X_{1}}f_{*}\eta V_{1},f_{*}CX_{2}\right)\\&+g_{1}\left(grad\ln\lambda,\eta V_{1}\right)g_{1}\left(X_{1},CX_{2}\right)\\&-g_{1}\left(X_{1},\eta V_{1}\right)g_{1}\left(grad\ln\lambda,CX_{2}\right).
	\end{align*}
	Hence the proof is completed.
\end{proof}

\begin{theorem}
	Suppose that $f$ is a proper conformal bi-slant submersion from a Kaehlerian manifold $(M_{1},g_{1},J)$ onto a Riemannian manifold $(M_{2},g_{2})$ with slant functions $\theta,\bar{\theta}$.Then the distribution $\left(\ker f_{*}\right)$ defines a totally geodesic foliation on $M_{1}
	$ if and only if
		\setlength\arraycolsep{2pt}
		\small{
	\begin{align}
\lambda^{-2}g_{2}\left(\nabla^{f}_{X_{1}}f_{*}\omega U_{1},f_{*}\omega V_{1}\right)=&\left(\cos^{2}\theta-\cos^{2}\bar{\theta}\right)g_{1}\left(\nabla_{X_{1}}QU_{1},V_{1}\right)-g_{1}\left(\mathcal{A}_{X_{1}}V_{1},\eta\xi U_{1}\right)\nonumber\\&-g_{1}\left(\mathcal{A}_{X_{1}}\xi V_{1},\eta U_{1}\right)-\sin^{2}\theta g_{1}\left(\left[U_{1},X_{1}\right],V_{1}\right)\nonumber\\&-g_{1}\left(X_{1},\eta U_{1}\right)g_{1}\left(grad\ln\lambda,\eta V_{1}\right)\nonumber\\&+g_{1}\left(grad\ln\lambda,X_{1}\right)g_{1}\left(\eta U_{1},\eta V_{1}\right)\nonumber\\&+g_{1}\left(grad\ln\lambda,\eta U_{1}\right)g_{1}\left(X_{1},\eta V_{1}\right)
\end{align}}\normalsize
	where $X_{1}\in\Gamma\left(\left(\ker f_{*}\right)^\perp\right)$ and $U_{1},V_{1}\in\Gamma\left(\ker f_{*}\right)$.
\end{theorem}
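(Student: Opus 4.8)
The plan is to establish the criterion by computing $g_1(\nabla_{X_1} U_1, V_1)$ for $X_1 \in \Gamma((\ker f_*)^\perp)$ and $U_1, V_1 \in \Gamma(\ker f_*)$, since the vertical distribution is totally geodesic precisely when this quantity vanishes for all such vectors. First I would write $U_1 = \xi U_1 + \eta U_1$ using \eqref{3.2}, apply $J$ twice together with the Kaehler condition $(\nabla J) = 0$ to move $J$ across the covariant derivative, and invoke Theorem 3.1 (the relation $\xi^2 = -\cos^2\theta$ on $D$ and $\xi^2 = -\cos^2\bar\theta$ on $\bar D$) to extract a $\sin^2\theta$ (resp. $\sin^2\bar\theta$) factor on the left. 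This is the same opening move used in the proofs of Theorems 3.3 and 3.5, so the decomposition will split $U_1$ and $V_1$ into their $D$- and $\bar D$-components, and the mismatch between the two slant angles is exactly what produces the $(\cos^2\theta - \cos^2\bar\theta)\,g_1(\nabla_{X_1} QU_1, V_1)$ term (here $Q$ and $\omega$ denote the evident component/normal operators, with $\omega = \eta$ on $\ker f_*$).

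Next I would convert the resulting horizontal covariant derivatives into the conformal data. For the terms of the form $g_1(\nabla_{X_1}\eta(\cdot), \text{horizontal})$ I would use \eqref{2.9} and Lemma 2.1(i) to rewrite $\mathcal{H}\nabla_{X_1}\eta U_1$ via $\nabla^f_{X_1} f_* \eta U_1$ and the three $\ln\lambda$ correction terms $g_1(grad\ln\lambda, X_1)g_1(\eta U_1,\eta V_1)$, $g_1(grad\ln\lambda,\eta U_1)g_1(X_1,\eta V_1)$, $g_1(X_1,\eta U_1)g_1(grad\ln\lambda,\eta V_1)$ — these are precisely the last three lines of the claimed identity. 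For the mixed terms I would use \eqref{2.6}–\eqref{2.7} to produce the $\mathcal{A}_{X_1}$ pieces $g_1(\mathcal{A}_{X_1} V_1, \eta\xi U_1)$ and $g_1(\mathcal{A}_{X_1}\xi V_1, \eta U_1)$, and I would introduce the bracket term by the usual trick $g_1(\nabla_{X_1} U_1, V_1) = g_1(\nabla_{U_1} X_1, V_1) + g_1([X_1,U_1],V_1)$, which after reorganizing yields the $-\sin^2\theta\, g_1([U_1,X_1],V_1)$ contribution (matching how Theorem 3.3 generates its bracket term).

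The main obstacle I anticipate is bookkeeping rather than any genuinely new idea: one must be careful that the computation is done consistently either from the $D$-side (producing $\sin^2\theta$ and $\cos^2\theta$) or the $\bar D$-side, and then symmetrized, so that the asymmetric-looking coefficient $(\cos^2\theta-\cos^2\bar\theta)$ emerges correctly after splitting $U_1 = QU_1 + \dots$ and $V_1$ into slant components and collecting the $\xi^2$ contributions from both distributions. A second delicate point is ensuring that all $\mu$-component contributions of $JX_1 = \mathcal{B}X_1 + \mathcal{C}X_1$ either cancel or are absorbed, since the statement is phrased only in terms of $\eta U_1, \eta\xi U_1, \xi V_1$ and $\mathcal{A}_{X_1}$ acting on vertical vectors; tracking where $\mathcal{C}X_1$ disappears (it should, because $g_1(\cdot, V_1)$ with $V_1$ vertical kills horizontal pieces after the $J$-transfer) is the step most likely to hide a sign error. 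Once the single scalar identity $g_1(\nabla_{X_1}U_1,V_1)=0$ is matched term-by-term with the displayed equation, totally geodesic foliation of $\ker f_*$ follows by the standard characterization, completing the proof.
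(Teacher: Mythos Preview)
Your approach is essentially the paper's own: start from the obstruction to total geodesicity, use the Kaehler condition with the decomposition $JU_1=\xi U_1+\eta U_1$ and Theorem~3.1 to extract the $\sin^2\theta$ factor and the $(\cos^2\theta-\cos^2\bar\theta)$ discrepancy from the $P/Q$ split, then convert the horizontal covariant derivatives via \eqref{2.6}--\eqref{2.7} and Lemma~2.1(i) to produce the $\mathcal{A}_{X_1}$ terms and the three $\ln\lambda$ corrections.

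One correction to your opening sentence: the totally-geodesic condition for $\ker f_*$ is $g_1(\nabla_{U_1}V_1,X_1)=0$ for vertical $U_1,V_1$ and horizontal $X_1$, not $g_1(\nabla_{X_1}U_1,V_1)=0$. The paper begins with $g_1(\nabla_{U_1}V_1,X_1)$ and immediately rewrites it as $-g_1([U_1,X_1],V_1)-g_1(\nabla_{X_1}U_1,V_1)$ via metric compatibility and torsion-freeness; your bracket trick in the third paragraph is exactly this step, so the computation goes through, but the logical order should be inverted so that the starting quantity actually encodes the geometric condition you are characterizing.
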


\begin{proof}
	Given $X_{1} \in\Gamma\left(\left(\ker f_{*}\right)^\perp\right)$ and $U_{1},V_{1}\in\left(\ker f_{*}\right)$. Then we obtain
	{\small
		\begin{align*}
		g_{1}\left(\nabla_{U_{1}}V_{1},X_{1}\right)=&-g_{1}\left(\left[U_{1},X_{1}\right],V_{1}\right)+g_{1}\left(J\nabla_{X_{1}}\xi U_{1},V_{1}\right)-g_{1}\left(\nabla_{X_{1}}\eta U_{1},JV_{1}\right)
		\end{align*}}
	By using Theorem 3.1 we have
	\begin{align*}
	g_{1}\left(\nabla_{U_{1}}V_{1},X_{1}\right)=&-g_{1}\left(\left[U_{1},X_{1}\right],V_{1}\right)-\cos^{2}\theta g_{1}\left(\nabla_{X_{1}} PU_{1},V_{1}\right)\\&-\cos^{2}\bar{\theta}g_{1}\left(\nabla_{X} QU_{1},V_{1}\right)+g_{1}\left(\nabla_{X_{1}}\eta\xi U_{1},V_{1}\right)\\&-g_{1}\left(\nabla_{X_{1}}\omega U_{1},\xi V_{1}\right)-g_{1}\left(\nabla_{X_{1}}\omega U_{1},\eta V_{1}\right).
	\end{align*}
    Then we arrive
	\begin{align*}
	\sin^{2}\theta g_{1}\left(\nabla_{U_{1}}V_{1},X_{1}\right)=&\left(\cos^{2}\theta-\cos^{2}\bar{\theta}\right)g_{1}\left(\nabla_{X_{1}}QU_{1},V_{1}\right)\\&+g_{1}\left(\nabla_{X_{1}}\eta\xi U_{1},V_{1}\right)-\sin^{2}\theta g_{1}\left(\left[U_{1},X_{1}\right],V_{1}\right)\\&-g_{1}\left(\nabla_{X_{1}}\eta U_{1},\xi V_{1}\right)-g_{1}\left(\nabla_{X_{1}}\eta U_{1},\eta V_{1}\right)
	\end{align*}
	From the equation \eqref{2.6} and Lemma 2.1 we obtain
		\begin{align*}
	\sin^{2}\theta g_{1}\left(\nabla_{U_{1}}V_{1},X_{1}\right)=&\left(\cos^{2}\theta-\cos^{2}\bar{\theta}\right)g_{1}\left(\nabla_{X_{1}}QU_{1},V_{1}\right)-g_{1}\left(\mathcal{A}_{X_{1}}V_{1},\eta\xi U_{1}\right)\\&-\sin^{2}\theta g_{1}\left(\left[U_{1},X_{1}\right],V_{1}\right)-g_{1}\left(\mathcal{A}_{X_{1}}\xi V_{1},\eta U_{1}\right)\\&+g_{1}\left(grad\ln\lambda,X_{1}\right)g_{1}\left(\eta U_{1},\eta V_{1}\right)\\&+g_{1}\left(grad\ln\lambda,\eta U_{1}\right)g_{1}\left(X_{1},\eta V_{1}\right)\\&-g_{1}\left(X_{1},\eta U_{1}\right)g_{1}\left(grad\ln\lambda,\eta V_{1}\right)\\&-\lambda^{-2}g_{2}\left(\nabla^{f}_{X_{1}}f_{*}\eta U_{1},f_{*}\eta V_{1}\right)
	\end{align*}
	Using above equation the desired equality is achieved.
\end{proof}

\begin{theorem}
	Suppose that $f$ is a proper conformal bi-slant submersion from a Kaehlerian manifold $(M_{1},g_{1},J)$ onto a Riemannian manifold $(M_{2},g_{2})$ with slant functions $\theta,\bar{\theta}$. Then, the total space $M_{1}$ is a locally product $M_{1D}\times M_{\bar{1D}}\times M_{1\left(\ker f_{*}\right)^{\perp}} $ if and only if the equations (3.4), (3.5), (3.6), (3.7), (3.8) and (3.9) are hold where $M_{1D}$, $M_{1\bar{D}}$ and $M_{1\left(\ker f_{*}\right)^{\perp}}$ are integral manifolds of the distributions $D$, $\bar{D}$ and $\left(\ker f_{*}\right)^{\perp}$, respectively. 	
\end{theorem}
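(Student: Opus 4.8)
The plan is to obtain Theorem 3.9 as a direct consequence of the three decomposition results already proved, Theorems 3.3, 3.4 and 3.6, combined with the de Rham--type product criterion recalled at the end of Section 2. Recall that if the tangent bundle of a Riemannian manifold splits orthogonally into finitely many distributions, then the manifold is locally isometric to the Riemannian product of the corresponding integral manifolds exactly when each of these distributions is integrable and defines a totally geodesic foliation; since a totally geodesic foliation is automatically integrable (if $\nabla_{U}U'\in\Gamma(D)$ for $U,U'\in\Gamma(D)$ then $[U,U']\in\Gamma(D)$ as well), it is enough to test the total geodesicity of the three mutually orthogonal foliations $D$, $\bar D$ and $(\ker f_*)^{\perp}$, whose direct sum is $TM_1$.

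First I would invoke Theorem 3.3: $D$ defines a totally geodesic foliation on $M_1$ if and only if (3.4) and (3.5) hold, where (3.4) is the statement that $g_1(\nabla_{U_1}U_2,V_1)=0$ for $V_1\in\Gamma(\bar D)$ and (3.5) is the statement that $g_1(\nabla_{U_1}U_2,X_1)=0$ for $X_1\in\Gamma((\ker f_*)^{\perp})$, so that together they say $\nabla_{U_1}U_2\in\Gamma(D)$. In the same manner, Theorem 3.4 shows that $\bar D$ is a totally geodesic foliation if and only if (3.6) and (3.7) hold, and Theorem 3.6 shows that $(\ker f_*)^{\perp}$ is a totally geodesic foliation if and only if (3.8) and (3.9) hold. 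Thus the conjunction of (3.4)--(3.9) is equivalent to the assertion that each of $D$, $\bar D$ and $(\ker f_*)^{\perp}$ is a totally geodesic foliation.

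To finish, I would run the product criterion. Assuming (3.4)--(3.9), the three summands of the orthogonal decomposition $TM_1=D\oplus\bar D\oplus(\ker f_*)^{\perp}$ are integrable and totally geodesic; applying the Koszul formula to vector fields lying in three distinct summands, the mixed term $g_1(\nabla_X Y,Z)$ vanishes by the cyclic cancellation of the three bracket terms, while for $X,Z$ in the same summand it vanishes by total geodesicity, so the Levi-Civita connection preserves each summand. Hence each of $D$, $\bar D$, $(\ker f_*)^{\perp}$ is parallel, and the de Rham--type theorem of Section 2 gives the local splitting $M_1\cong M_{1D}\times M_{1\bar D}\times M_{1(\ker f_*)^{\perp}}$ into the integral manifolds; conversely, in such a product each factor distribution is parallel, hence totally geodesic, so (3.4)--(3.9) all hold. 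The one point that needs a little care is that the criterion recalled in Section 2 is stated for a splitting into two factors, so it has to be bootstrapped to three factors --- by the Koszul computation just described, or equivalently by grouping $D\oplus\bar D=\ker f_*$ and applying the two-factor statement first to the pair $(\ker f_*,(\ker f_*)^{\perp})$ and then to the pair $(D,\bar D)$ inside $\ker f_*$; the remainder of the argument is the bookkeeping of matching each displayed equation to the correct foliation.
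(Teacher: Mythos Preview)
Your approach is correct and is exactly what the paper intends: the theorem is stated in the paper without proof, as an immediate corollary of Theorems 3.3, 3.4 and 3.6 together with the de Rham--type product criterion recalled at the end of Section~2. Your additional remarks on bootstrapping the two-factor criterion to three factors and on the Koszul computation are more detail than the paper provides, but they are sound; just note that in the paper's numbering the statement is Theorem~3.8, not 3.9.
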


\begin{theorem}
	Suppose that $f$ is a proper conformal bi-slant submersion from a Kaehlerian manifold $(M_{1},g_{1},J)$ onto a Riemannian manifold $(M_{2},g_{2})$ with slant functions $\theta,\bar{\theta}$. Then, the total space $M_{1}$ is a locally product $M_{1\ker f_{*}}\times M_{1\left(\ker f_{*}\right)^{\perp}} $ if and only if the equations (3.8), (3.9) and (3.10)are hold where $M_{1\ker f_{*}}$ and $M_{1\left(\ker f_{*}\right)^{\perp}}$ are integral manifolds of the distributions $\ker f_{*}$ and $\left(\ker f_{*}\right)^{\perp}$, respectively. 	
\end{theorem}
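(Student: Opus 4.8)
The plan is to reduce the statement to the two totally geodesic foliation criteria already established, via the product-manifold characterization recalled at the end of Section 2. First I would observe that, by the definition of a (horizontally) conformal submersion, $\ker f_{*}$ and $(\ker f_{*})^{\perp}$ are mutually orthogonal complementary distributions on $M_{1}$, so the pair of canonical foliations they generate intersect perpendicularly at every point of $M_{1}$; hence the cited criterion applies and tells us that $g_{1}$ is, locally, the metric of the product $M_{1\ker f_{*}}\times M_{1(\ker f_{*})^{\perp}}$ precisely when \emph{both} of these distributions define totally geodesic foliations.

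Next I would invoke Theorem 3.6: the horizontal distribution $(\ker f_{*})^{\perp}$ defines a totally geodesic foliation on $M_{1}$ if and only if equations (3.8) and (3.9) hold, the two equations there covering respectively a test vector $U_{1}\in\Gamma(D)$ and a test vector $V_{1}\in\Gamma(\bar{D})$. Since $\ker f_{*}=D\oplus\bar{D}$ and the relevant expressions are linear in the vertical test vector, (3.8) together with (3.9) is equivalent to $g_{1}(\nabla_{X_{1}}X_{2},W)=0$ for all $X_{1},X_{2}\in\Gamma((\ker f_{*})^{\perp})$ and all $W\in\Gamma(\ker f_{*})$, i.e. to total geodesicity of the horizontal foliation. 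Then I would invoke Theorem 3.7: the vertical distribution $\ker f_{*}$ defines a totally geodesic foliation on $M_{1}$ if and only if equation (3.10) holds for all $X_{1}\in\Gamma((\ker f_{*})^{\perp})$ and $U_{1},V_{1}\in\Gamma(\ker f_{*})$.

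Putting these together, $M_{1}$ is locally the product $M_{1\ker f_{*}}\times M_{1(\ker f_{*})^{\perp}}$ $\iff$ both foliations are totally geodesic $\iff$ (3.8), (3.9) and (3.10) all hold, which is the assertion. I do not expect a substantive obstacle here: the analytic content has been front-loaded into Theorems 3.6 and 3.7, so the remaining work is purely organizational — namely checking that the splitting $\ker f_{*}=D\oplus\bar{D}$ makes (3.8)--(3.9) together equivalent to total geodesicity of the full horizontal foliation (testing against $U_{1}\in\Gamma(D)$ and against $V_{1}\in\Gamma(\bar{D})$ separately suffices by bilinearity), and confirming that ``locally product'' is exactly what the Section 2 criterion delivers, so that no completeness or simple-connectedness hypothesis on $M_{1}$ is required.
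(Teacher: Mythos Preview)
Your proposal is correct and follows precisely the approach the paper intends: the theorem is stated without proof in the paper, as an immediate consequence of Theorem~3.6 (giving (3.8)--(3.9) as the criterion for $(\ker f_{*})^{\perp}$ to be totally geodesic), Theorem~3.7 (giving (3.10) as the criterion for $\ker f_{*}$ to be totally geodesic), and the locally-product criterion recalled at the end of Section~2. Your added remark that testing against $U_{1}\in\Gamma(D)$ and $V_{1}\in\Gamma(\bar D)$ separately suffices by linearity in the vertical argument is a point the paper leaves implicit, so your write-up is, if anything, slightly more complete.
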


\begin{theorem}
	Suppose that $f$ is a proper conformal bi-slant submersion from a Kaehlerian manifold $(M_{1},g_{1},J)$ onto a Riemannian manifold $(M_{2},g_{2})$ with slant functions $\theta,\bar{\theta}$. Then $f$ is totally geodesic if and only if

\begin{align*}
-\lambda^{-2}g_{2}\left(\nabla^{f}_{\eta V_{1}}f_{*}\eta U_{1},f_{*}JCX_{1}\right)=&\left(\cos^{2}\theta-\cos^{2}\bar{\theta}\right) g_{1}\left(\mathcal{T}_{U_{1}}QV_{1},X_{1}\right)\\&+\lambda^{-2}g_{2}\left(\nabla f_{*}\left(\xi U_{1},\eta V_{1}\right),f_{*} JCX_{1}\right)\\&-g_{1}\left(\eta U_{1},\eta V_{1}\right)g_{1}\left(grad\ln\lambda,JCX_{1}\right)\\&+\lambda^{-2}g_{2}\left(\nabla f_{*}\left(U_{1},\eta\xi V_{1}\right),f_{*}X_{1}\right)\\&-g_{1}\left(\mathcal{T}_{U_{1}}\eta V_{1},BX_{1}\right)
\end{align*}
	and
	\begin{align*}
\lambda^{-2}g_{2}\left(\nabla_{X_{1}}^{f}f_{*}\eta U_{1},f_{*}CX_{2}\right)=&\left(\cos^{2}\theta-\cos^{2}\bar{\theta}\right)g_{1}\left(\mathcal{A}_{X_{1}}QU_{1},X_{2}\right)\\&+\lambda^{-2}g_{2}\left(\nabla_{X_{1}}^{f}f_{*}\eta \xi U_{1},f_{*}X_{2}\right)\\&-g_{1}\left(grad\ln\lambda,X_{1}\right)g_{1}\left(\eta\xi U_{1},X_{2}\right)\\&-g_{1}\left(grad\ln\lambda,\eta\xi U_{1}\right)g_{1}\left(X_{1},X_{2}\right)\\&+g_{1}\left(X_{1},\eta\xi U_{1}\right)g_{1}\left(grad\ln\lambda,X_{2}\right)\\&+g_{1}\left(grad\ln\lambda,\eta U_{1}\right)g_{1}\left(X_{1},CX_{2}\right)\\&-g_{1}\left(X_{1},\eta U_{1}\right)g_{1}\left(grad\ln\lambda,CX_{2}\right)\\&+g_{1}\left(\mathcal{A}_{X_{1}}BX_{2},\eta U\right).
	\end{align*}

	where $X_{1},X_{2}\in\Gamma\left(\left(\ker f_{*}\right)^\perp\right)$ and $U_{1},V_{1}\in\Gamma\left(\ker f_{*}\right)$.
\end{theorem}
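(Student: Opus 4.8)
The plan is to use the standard characterization of totally geodesic maps via the second fundamental form: $f$ is totally geodesic if and only if $(\nabla f_*)(E_1,E_2)=0$ for all $E_1,E_2\in\Gamma(TM_1)$. Since $(\nabla f_*)$ is symmetric and tensorial, and since $TM_1=D\oplus\bar D\oplus(\ker f_*)^\perp$, it suffices to check the vanishing on the pairs $(D\oplus\bar D,\,D\oplus\bar D)$, $(D\oplus\bar D,\,(\ker f_*)^\perp)$ and $((\ker f_*)^\perp,(\ker f_*)^\perp)$. By Lemma 2.1(i) the horizontal-horizontal part is automatically controlled by the conformal factor, and by Lemma 2.1(ii)–(iii) the vertical and mixed parts reduce to statements about $\mathcal T$ and $\mathcal A$; the point of the theorem is to re-express these conditions intrinsically, using that $M_1$ is Kaehlerian and using the slant identities of Theorem 3.1 to eliminate $\xi^2$ in favour of $-\cos^2\theta\,(\cdot)$ on $D$ and $-\cos^2\bar\theta\,(\cdot)$ on $\bar D$.

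First I would handle the vertical sector. For $U_1\in\Gamma(D)$, $V_1\in\Gamma(\bar D)$ and $X_1\in\Gamma((\ker f_*)^\perp)$, I would compute $g_2\big((\nabla f_*)(U_1,V_1),f_*X_1\big)$ by writing $(\nabla f_*)(U_1,V_1)=-f_*(\mathcal T_{U_1}V_1)$ (Lemma 2.1(ii)) and then converting $g_1(\mathcal T_{U_1}V_1,X_1)=g_1(\nabla_{U_1}V_1,X_1)$ into holomorphic data via $g_1(\nabla_{U_1}V_1,X_1)=g_1(J\nabla_{U_1}V_1,JX_1)=g_1(\nabla_{U_1}JV_1,JX_1)$ using $\nabla J=0$. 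Then I decompose $JV_1=\xi V_1+\eta V_1$ with $\xi V_1\in\Gamma(\bar D)$, $\eta V_1$ horizontal, and $JX_1=\mathcal BX_1+\mathcal CX_1$; applying $J$ again to $\xi V_1$ produces $\xi^2 V_1=-\cos^2\bar\theta\,V_1$ by Theorem 3.1(ii), together with an $\eta\xi V_1$ term, and symmetrically an $\eta\xi U_1$ term on the $D$ side. Collecting the purely vertical terms gives the coefficient $\bigl(\cos^2\theta-\cos^2\bar\theta\bigr)g_1(\mathcal T_{U_1}QV_1,X_1)$ (here $Q$ projects onto $\bar D$), while the terms containing $\eta(\cdot)$ become horizontal derivatives that I rewrite with $\nabla^f$ and the second-fundamental-form formula \eqref{2.9}, producing the $\lambda^{-2}g_2(\nabla^f_{\eta V_1}f_*\eta U_1,f_*JCX_1)$ and $g_2(\nabla f_*(\cdot,\cdot),f_*\cdot)$ terms; the remaining pieces involving $J$ acting on a horizontal vector produce, via Lemma 2.1(i), the $g_1(\mathrm{grad}\ln\lambda,JCX_1)$ term and the $g_1(\mathcal T_{U_1}\eta V_1,BX_1)$ term. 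This yields the first displayed equation.

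Next I would treat the mixed sector, i.e. $g_2\big((\nabla f_*)(X_1,U_1),f_*X_2\big)$ for $X_1,X_2\in\Gamma((\ker f_*)^\perp)$, $U_1\in\Gamma(\ker f_*)$. By Lemma 2.1(iii) this is $-g_2(f_*(\mathcal A_{X_1}U_1),f_*X_2)=-\lambda^2 g_1(\mathcal A_{X_1}U_1,X_2)=-\lambda^2 g_1(\nabla_{X_1}U_1,X_2)$ modulo vertical-component care, and I again convert to holomorphic data: $g_1(\nabla_{X_1}U_1,X_2)=g_1(\nabla_{X_1}JU_1,JX_2)$, decompose $JU_1=\xi U_1+\eta U_1$, $JX_2=\mathcal BX_2+\mathcal CX_2$, apply $J$ to $\xi U_1$ to get $\xi^2 U_1=-\cos^2\theta\,PU_1-\cos^2\bar\theta\,QU_1$ (writing $U_1=PU_1+QU_1$ in $D\oplus\bar D$), and use \eqref{2.5}–\eqref{2.7} plus Lemma 2.1 to package the horizontal derivatives into $\nabla^f$ terms. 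The $\mathrm{grad}\ln\lambda$ contributions come entirely from Lemma 2.1(i) applied to $(\nabla f_*)(X_1,\eta\xi U_1)$ and $(\nabla f_*)(X_1,\eta U_1)$, and the term $g_1(\mathcal A_{X_1}BX_2,\eta U_1)$ comes from the $\mathcal BX_2$ part. Finally I would note that the horizontal-horizontal vanishing $(\nabla f_*)(X_1,X_2)=0$ gives no new condition beyond what is forced (it only constrains $\lambda$ in the classical way), so the two displayed equations together are necessary and sufficient.

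The main obstacle I anticipate is bookkeeping rather than conceptual: keeping the $D$ versus $\bar D$ projections ($P$, $Q$, $\alpha$, $\beta$) straight when $\xi^2$ acts on a general vertical vector, since the two slant angles differ and $\xi$ does not preserve $D\oplus\bar D$ pointwise in an obvious way; and carefully tracking every $\mathrm{grad}\ln\lambda$ term generated by Lemma 2.1(i), as these proliferate whenever a horizontal vector is differentiated. I would organize the computation so that each time a horizontal field appears inside $\nabla^f$ I immediately apply \eqref{2.9} and Lemma 2.1(i) in one stroke, and I would verify the final identities by checking the two known degenerate cases (conformal slant, $\bar\theta=\theta$, and conformal semi-invariant) where the $(\cos^2\theta-\cos^2\bar\theta)$ coefficient collapses, as a consistency check.
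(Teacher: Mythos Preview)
Your proposal is correct and follows essentially the same route as the paper: start from $(\nabla f_*)(U_1,V_1)=-f_*(\nabla_{U_1}V_1)$ on the vertical sector and from the mixed second fundamental form on the $X_1,U_1$ sector, use the Kaehler identity $\nabla J=0$ to pass to $g_1(\nabla\,J(\cdot),J(\cdot))$, decompose via $\xi,\eta,\mathcal B,\mathcal C$, apply Theorem~3.1 to replace $\xi^{2}$ by $-\cos^{2}\theta\,P-\cos^{2}\bar\theta\,Q$, and then repackage every horizontal derivative through equations \eqref{2.4}--\eqref{2.7} and Lemma~2.1. One small adjustment: in the vertical step the paper (and the theorem statement) takes $U_1,V_1\in\Gamma(\ker f_*)$ from the outset rather than $U_1\in\Gamma(D)$, $V_1\in\Gamma(\bar D)$, so the $(\cos^2\theta-\cos^2\bar\theta)$ coefficient arises directly from $\xi^2 V_1=-\cos^2\theta\,PV_1-\cos^2\bar\theta\,QV_1$ and the subtraction $g_1(\nabla_{U_1}V_1,X_1)=\cos^2\theta\,g_1(\nabla_{U_1}V_1,X_1)+\dots$, rather than from pairing the two subbundles; you already anticipate this in your ``obstacle'' remark, so just begin with general vertical $U_1,V_1$ and you will match the paper line for line.
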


\begin{proof}
Given $U_{1},V_{1}\in \Gamma\left(\ker f_{*}\right)$ and $X_{1}\in\Gamma\left(\left(\ker f_{*}\right)^{\perp}\right)$ Then we write
	\begin{equation*}
	\lambda^{-2}g_{2}\left(\nabla f_{*}(U_{1},V_{1}),f_{*}X\right)=-\lambda^{-2}g_{2}\left(f_{*}\nabla_{U_{1}}V_{1},f_{*}X\right).
	\end{equation*}
	From Theorem 3.1 we obtain
	\setlength\arraycolsep{2pt}
	\begin{align*}
\left(\sin^{2}\theta\right) \lambda^{-2}g_{2}\left(\nabla f_{*}\left(U_{1},V_{1}\right),f_{*}X_{1}\right)=&\left(\cos^{2}\theta-\cos^{2}\bar{\theta}\right) g_{1}\left(\nabla_{U_{1}}QV_{1},X_{1}\right)\\&-g_{1}\left(\nabla_{U_{1}}\eta V_{1},JX_{1}\right)+g_{1}\left(\nabla_{U_{1}}\eta\xi V_{1},X_{1}\right)
	\end{align*}
	Considering \eqref{2.4}, \eqref{2.5} and Lemma 2.1 we find 
	\begin{align*}
	\left(\sin^{2}\theta\right) \lambda^{-2}g_{2}\left(\nabla f_{*}\left(U_{1},V_{1}\right),f_{*}X_{1}\right)=&\left(\cos^{2}\theta-\cos^{2}\bar{\theta}\right) g_{1}\left(\mathcal{T}_{U_{1}}QV_{1},X_{1}\right)\\&+\lambda^{-2}g_{2}\left(\nabla f_{*}\left(\xi U_{1},\eta V_{1}\right),f_{*} JCX_{1}\right)\\&-g_{1}\left(\eta U_{1},\eta V_{1}\right)g_{1}\left(grad\ln\lambda,JCX_{1}\right)\\&+\lambda^{-2}g_{2}\left(\nabla^{f}_{\eta V_{1}}f_{*}\eta U_{1},f_{*}JCX_{1}\right)\\&+\lambda^{-2}g_{2}\left(\nabla f_{*}\left(U_{1},\eta\xi V_{1}\right),f_{*}X_{1}\right)\\&-g_{1}\left(\mathcal{T}_{U_{1}}\eta V_{1},BX_{1}\right).
	\end{align*}
	Therefore we obtain the first equation of Theorem 3.6.\\
	On the other hand, for $X_{1},X_{2}\in \Gamma\left(\left(\ker f_{*}\right)^{\perp}\right)$ and $U_{1}\in\Gamma\left(\ker f_{*}\right)$ we can write
	\begin{align*}
	\left(\sin^{2}\theta\right) \lambda^{-2} g_{2}\left(\nabla f_{*}\left(U_{1},X_{1}\right),f_{*}X_{2}\right)=&\left(\cos^{2}\theta-\cos^{2}\bar{\theta}\right)g_{1}\left(\nabla_{X_{1}}QU_{1},X_{2}\right)\\&+g_{1}\left(\nabla_{X_{1}}\eta U_{1},BX_{2}\right)-g_{1}\left(\nabla_{X_{1}}\eta U_{1},CX_{2}\right).
	\end{align*}
	By using the equation (2.6) and Lemma 2.1, we arrive
	\setlength\arraycolsep{2pt}
	\begin{align*}
	\left(\sin^{2}\theta\right) \lambda^{-2}g_{2}\left(\nabla f_{*}\left(U_{1},X_{1}\right),f_{*}X_{2}\right)=&\left(\cos^{2}\theta-\cos^{2}\bar{\theta}\right)g_{1}\left(\mathcal{A}_{X_{1}}QU_{1},X_{2}\right)\\&+\lambda^{-2}g_{2}\left(\nabla_{X_{1}}^{f}f_{*}\eta \xi U_{1},f_{*}X_{2}\right)\\&-g_{1}\left(grad\ln\lambda,X_{1}\right)g_{1}\left(\eta\xi U_{1},X_{2}\right)\\&-g_{1}\left(grad\ln\lambda,\eta\xi U_{1}\right)g_{1}\left(X_{1},X_{2}\right)\\&+g_{1}\left(X_{1},\eta\xi U_{1}\right)g_{1}\left(grad\ln\lambda,X_{2}\right)\\&-\lambda^{-2}g_{2}\left(\nabla_{X_{1}}^{f}f_{*}\eta U_{1},f_{*}CX_{2}\right)\\&+g_{1}\left(grad\ln\lambda,\eta U_{1}\right)g_{1}\left(X_{1},CX_{2}\right)\\&-g_{1}\left(X_{1},\eta U_{1}\right)g_{1}\left(grad\ln\lambda,CX_{2}\right)\\&+g_{1}\left(\mathcal{A}_{X_{1}}BX_{2},\eta U_{1}\right).
	\end{align*}
	This concludes the proof.
\end{proof}

\end{document}